\newcommand{\Bel}{\mathbf B}
\newcommand{\M}{\mathbf M}
\newcommand{\ci}[1]{_{ {}_{\scriptstyle #1}}}
\newcommand{\cii}[1]{_{ {}_{ #1}}}
\newcommand{\av}[2]{\langle #1\rangle\cii {#2}}
\newcommand{\const}{\operatorname{const}}
\newcommand{\sign}{\operatorname{sign}}
\newcommand{\vf}{\varphi}
\newcommand{\half}{\frac12}
\newcommand{\ve}{\varepsilon}
\newcommand{\R}{{\mathbb R}}
\newcommand{\D}{{\mathcal D}}
\newcommand{\J}{\mathcal J}
\newtheorem{theorem}{Theorem}
\newtheorem{lemma}[theorem]{Lemma}
\newtheorem{cor}[theorem]{Corollary}
\newtheorem*{prop}{Proposition}
\begin{document}

\thispagestyle{empty}

\title[Burkholder's function]{{Burkholder's function\\ via Monge--Amp\`ere equation}}
\author{Vasily Vasyunin}\address{Vasily Vasyunin, V.A. Steklov. Inst., Fontanka 27, St. Petersburg, 191023, Russia,
{\tt vasyunin@pdmi.ras.ru}}
\author{Alexander Volberg}\address{Alexander Volberg, Dept. of  Math., MSU,
{\tt volberg@math.msu.edu}}

\thanks{Research of the first author was partially supported by RFBR grant 08-01-00723. The second author is supported by NSF grant DMS 0758552}

\maketitle

\begin{abstract}
We will show how to get Burkholder's  function from~\cite{Bu1} by using
Monge-Amp\`ere equation. This method is quite different from those in the
series of Burkholder's papers~\cite{Bu1}--\cite{Bu7}.
\end{abstract}

\section{Introduction}

Bellman function method in Harmonic Analysis was introduced by Burkholder for
finding the norm in $L^p$ of the Martingale transform. Later it became clear
that the scope of the method is quite wide.

The technique, originated in Burkholder's papers~\cite{Bu1}--\cite{Bu7}, can be
credited for helping to solve several old Harmonic Analysis problems and for
unifying approach to many others. In the first category one would name the
(sharp weighted) estimates of such classical operators as  the
Ahlfors--Beurling transform (Banuelos--Wang~\cite{BaWa1},
Banuelos--Janakiraman~\cite{BaJa1}, Banuelos--Mendez~\cite{BaMH},
Na\-za\-rov--Volberg~\cite{NV1}, Petermichl--Volberg~\cite{PV},
Dragicevic--Volberg~\cite{DV2}) and the Hilbert and Riesz transforms
(Petermichl~\cite{P1}, \cite{P2}). In the second category one can name all kind
of dimension free estimates of weighted and unweighted Riesz transforms (see a
vast literature in~\cite{DV1}--\cite{DV3}). Roughly, Bellman function method
makes apparent the hidden scaling properties of a given Harmonic Analysis
problem. Conversely, given a Harmonic Analysis problem with certain scaling
properties one can (formally) associate with is a non-linear PDE, the so-called
Bellman equation of the problem.

Let us recall to the reader that in the series of papers~\cite{Bu},
\cite{Bu1}--\cite{Bu7} Donald Burkholder investigated Martingale transform and
gave the sharp bounds on this operator in various settings--but by similar
methods. The methods were so novel and powerful that the influence of these
articles will be felt for many years to come. The novelty was a key. One of the
leading mathematician working in the domain of Harmonic Analysis  told the
second author that these papers of Burkholder ``spin his head".  In the book of
Daniel Strook~\cite{Str} many pages are devoted to the technique developed by
Burkholder in the abovementioned series of papers, and the reader can sense the
same feeling. It is explained in~\cite{Str} that the simplest way to understand
the sharp estimates of Martingale transform obtained by Burkholder is to
operate with one of the so-called Burkholder's function:
\begin{equation}
\label{up}
u_p(x,y) = p(1-\frac1{p^*})^{p-1}( |y|-(p^*-1)|x|)(|x|+|y|)^{p-1}\,,
\end{equation}
here $p^*:=\max (p, \frac{p}{p-1})$, $1<p<\infty\,.$

However, the main question is of course how to get this function? Where did it
come from? These questions are asked in~\cite{Str} as well. Of course,
Burkholder explains in many details the way this function (and several of its
relatives) are obtained. It is almost (but not quite) the least bi-concave
majorant of function
\begin{equation}
\label{op} |y|^{p-1}-(p^*-1)^p|x|^p\,.
\end{equation}
It is obtained by solving a certain PDE and performing certain manipulations
with the solution after that. The reader will find much more about $u_p$ after
reading this article, in particular in Section~\ref{shortcut}.

But it seems like the same questions persist even after this explanation. And a
new question can appear: how wide is the applicability of the technique that
Burkholder elaborated  in~\cite{Bu1}--\cite{Bu7}? There is a vague feeling that
the area of applicability is quite wide. To make this feeling more precise one
should look at the function above closer and see that it is a creature from
another universe, which, initially, does not have too much in common with
Harmonic Analysis. Burkholder function is a natural dweller of the area called
Stochastic Optimal Control. It is a solution of a corresponding Bellman
equation (or a dynamic programming equation) but in the setting, when the
differential equations subject to control are not the usual ones. They are
stochastic differential equations.
The reader can find some notes on this in~\cite{VoEcole}, \cite{NTV2}, \cite{VaVo},
\cite{VaVo2},~\cite{SlSt}. These notes explain why Stochastic Optimal Control
is the right tool to work with a certain class of Harmonic Analysis problems.
On the other hand, Stochastic Optimal Control problems generically can be
reduced to solving a so-called Bellman PDE (and proving the so-called
``verification theorems'', but this is a second task). Bellman PDEs belong to
the class of fully non-linear PDEs. Often they are PDEs of Monge--Amp\`ere
type. In the present article we would like to show the reader  how to obtain
Burkhloder functions (the one above and others from~\cite{Bu1}--\cite{Bu7}) by
reducing the search for them to solving certain Monge--Amp\`ere equations. The
scope of the application of the methods of Stochastic Optimal Control to
Harmonic Analysis proved to be quite large. After Burkholder the first
systematic application of this technique appeared in 1995 in the first preprint
version of~\cite{NTV1}. It was vastly developed in~\cite{NT} and in (now)
numerous  papers that followed. A small part of this literature can be found in
the bibliography below.

%%%%%%%%%%%%%%%%%%%%%%%%%%%%%%%%%%%%%%%%%%%%%%%%%%%%%%%%%%%%%%%%%%%%%%%%%%%%%

\section{Notations and definitions}

We shall say that an interval $I$ and a pair of positive numbers $\alpha^\pm_I$
such that $\alpha^++\alpha^-=1$ generate a pair of subintervals $I^+$ and $I^-$
if $|I^\pm|=\alpha^\pm_I|I|$ ($|I|$ means the length of $I$) and
$I=I^-\cup{I^+}$. For a given interval $J$ the symbol $\J=\J(\alpha)$ will
denote the families of subintervals of $J$ such that
\begin{itemize}
\item{$J\in\J$}\,;
\item{if $I\in\J$ then $I^\pm\in\J$\,.}
\end{itemize}
For a special choice if all $\alpha^\pm_I=\half$ we get the dyadic family $\J=\D$.
Every family $\J$ has its own set of Haar functions:
\begin{equation*}
\forall I\in\J\qquad h\cii I(t)=
\begin{cases}
\phantom{-}\sqrt{\frac{\alpha^+_I}{\alpha^-_I|I|}}\quad&\text{if}\ t\in I_-,\\
-\sqrt{\frac{\alpha^-_I}{\alpha^+_I|I|}}\quad&\text{if}\ t\in I_+.
\end{cases}
\end{equation*}
If the family $\J$ is such that that the maximal length of the interval of
$n$-th generation (i.e., after splitting the initial interval $J$ into $2^n$
parts) tends to $0$ as $n\to\infty$, the Haar family forms an orthonormal basis
in the space $L^2(J)\ominus\{\const\}$.
\medskip

For a function $f\in L^1(I)$ the symbol $\av fI$ means the average of $f$ over
the interval $I$:
$$
\av fI=\frac1{|I|}\int_I f(t)dt\,.
$$

\noindent{\bf Definition.} Fix a real $p$, $1<p<\infty$, and let $p'=\frac
p{p-1}$, $p^*=\max\{p,p'\}$. Introduce the following domain in $\R^3$:
\begin{equation*}
\Omega=\Omega(p)=\{x=(x_1,x_2,x_3):\ x_3\ge0,\ |x_1|^p\le x_3\}\,.
\end{equation*}
For a fixed partition $\J$ of an interval $J$ we define two function on this
domain
\begin{equation*}
\gathered
\Bel_{\max}(x)=\Bel_{\max}(x;p)=\sup_{f,\,g}\bigl\{\av{|g|^p}J\bigr\},
\\
\Bel_{\min}(x)=\Bel_{\min}(x;p)=\inf_{f,\,g}\bigl\{\av {|g|^p}J \bigr\},
\endgathered
\end{equation*}
where the supremum is taken over all functions $f$, $g$ from $L^p(J)$ such that
$\av fJ=x_1$, $\av gJ=x_2$, $\av{|f|^p}J=x_3$, and
$|(f,h\cii{I})|=|(g,h\cii{I})|$. We shall refer to any such pair of functions
$f$, $g$ as to an admissible pair. When $|(f,h\cii I)|=|(g,h\cii I)|$ happens
for all dyadic intervals inside $J$ {\bf  we call $g$ a Martingale transform of
$f$}. We shall call $\Bel_{\max}(x)$ (and $\Bel_{\min}(x)$) the Bellman
functions of the problem of finding the best constant for the Martingale
transform inequality:
\begin{equation}
\label{p1}
|\av gJ|\le |\av fJ|\Rightarrow \langle |g|^p\rangle_J\le C(p) \langle |f|^p\rangle_J\,.
\end{equation}
This best constant was found by Burkholder: $C(p)=(p^*-1)^p,\, p^*:=\max (p, \frac{p}{p-1})$.

\noindent{\bf Remark 1.} It is amazing that there is no proof that would find
this $C(p)$ without finding the function of $3$ variables $\Bel_{\max}(x)$ or
some of its relatives (like, for example, $u_p$ from~\eqref{up}).

\medskip

\noindent{\bf Remark 2.} Burkholder proved that the functions $\Bel$ do not
depend on the initial interval $J$ and on a specific choice of its partition.
Below we work only with dyadic partitions.

\medskip

\noindent{\bf Remark 3.} In the case $p=2$ the Bellman function are evident:
\begin{equation*}
\Bel_{\max}(x)=\Bel_{\min}(x)=x_2^2+x_3-x_1^2\,.
\end{equation*}
Indeed, since
\begin{equation*}
\|f\|_2^2=|J|x_3=|J|x_1^2+\sum_{I\in\J}|(f,h\cii I)|^2\,,
\end{equation*}
we have
\begin{align*}
\av{|g|^2}J&=\frac1{|J|}\|g\|_2^2=x_2^2+\frac1{|J|}\sum_{I\in\J}|(g,h\cii I)|^2
\\
&=x_2^2+\frac1{|J|}\sum_{I\in\J}|(f,h\cii I)|^2=x_2^2+x_3-x_1^2\,.
\end{align*}

\medskip

Define the following function on $\mathbb R^2_+=\{z=(z_1,z_2)\colon z_i>0\}$:
\begin{equation}
\label{Fp}
F_p(z_1,z_2)\!=\!
\begin{cases}
\big[z_1^p-(p^*-1)^p z_2^p\big],\,\, \text{if }
z_1\le(p^*\!\!-\!1)z_2\,,
\\
p(1-\frac{1}{p^*})^{p-1}(z_1\!+\!z_2)^{p-1}\big[z_1\!-\!(p^*\!\!-\!1)z_2\big],
\,\, \text{if } z_1\ge(p^*\!\!-\!1)z_2\,.
\end{cases}
\end{equation}
Note for for $p=2$ the expressions above are reduced to
$F_2(z_1,z_2)=z_1^2-z_2^2$.

\section{The main result}

Now we are ready to state the main result:

\begin{theorem}
\label{t1} The equation
$F_p(|x_1|,|x_2|)=F_p(x_3^{\frac1p},\Bel_{\phantom3}^{\frac1p})$ determines
implicitly the function $\Bel=\Bel_{\min}(x;p)$ and the equation
$F_p(|x_2|,|x_1|)=F_p(\Bel_{\phantom3}^{\frac1p},x_3^{\frac1p})$ determines
implicitly the function $\Bel=\Bel_{\max}(x;p)$.
\end{theorem}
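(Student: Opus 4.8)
The plan is to treat $\Bel_{\min}$ and $\Bel_{\max}$ as the least and greatest solutions of the Bellman boundary-value problem, and to verify that the function $\Bel$ defined implicitly by the given equation solves this problem. Concretely, the dyadic self-similarity of the definition gives, for any $x\in\Omega$ and any admissible splitting $x = \alpha^+ x^+ + \alpha^- x^-$ with $x^\pm\in\Omega$ and the Haar increments of $f$ and $g$ equal in absolute value, the \emph{main inequality}
\begin{equation*}
\Bel(x)\ge \alpha^+\Bel(x^+) + \alpha^-\Bel(x^-)
\end{equation*}
for $\Bel=\Bel_{\max}$ (and the reverse for $\Bel_{\min}$), together with the obstacle/boundary condition $\Bel(x)\ge x_2^p$ on the lower face $|x_1|^p = x_3$ coming from the trivial admissible pair $f\equiv x_1$, $g\equiv x_2$ with no splitting, plus the homogeneity $\Bel(\lambda x_1,\lambda x_2,\lambda^p x_3)=\lambda^p\Bel(x)$. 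First I would record precisely which vectors $x^\pm$ are reachable: writing $x^\pm = (x_1\pm\alpha^\mp\Delta_1,\ x_2\pm\alpha^\mp\Delta_2,\ x_3\pm\alpha^\mp\Delta_3)$ one needs $|\Delta_1|=|\Delta_2|$ (the Martingale-transform constraint) and $x^\pm\in\Omega$; in the infinitesimal limit this produces a second-order differential inequality, namely that the Hessian of $\Bel$ be nonpositive on the cone $d x_3 \ge 2 x_1\, dx_1$, $dx_1 = \pm dx_2$ — this is the Monge--Amp\`ere-type Bellman PDE advertised in the title.

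Next I would verify that the candidate $\Bel$ from the theorem actually satisfies all of this. The key structural observation is that $F_p$ is built so that the curve $F_p(z_1,z_2)=c$ in the first quadrant is exactly a trajectory along which the relevant differential operator degenerates; so I would (i) check that along the lower boundary $x_3 = |x_1|^p$ the defining equation forces $\Bel = |x_2|^p$, using $F_p(z,z)$ being (up to the normalization in~\eqref{up}) proportional to $z^p - (p^*-1)^p z^p$ only in the first branch — more carefully, one checks directly from~\eqref{Fp} that $x_3=|x_1|^p$ gives $\Bel^{1/p}=|x_2|$; (ii) compute, by implicit differentiation of $F_p(|x_1|,|x_2|)=F_p(x_3^{1/p},\Bel^{1/p})$, the gradient and Hessian of $\Bel$ and confirm the required concavity along the admissible directions, which reduces to a sign computation involving the known convexity/concavity properties of the two pieces of $F_p$ and their $C^1$-gluing at $z_1=(p^*-1)z_2$; (iii) check homogeneity, which is immediate since $F_p$ is homogeneous of degree $p$ in $(z_1,z_2)$, so the defining relation is scale-invariant in the stated weighted sense.

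Having a function satisfying the main inequality and the obstacle condition, the standard Bellman argument gives one inequality between the candidate and the true Bellman function: iterating the main inequality down the dyadic tree and passing to the limit shows $\Bel_{\max}(x)\le \Bel(x)$ (and $\Bel_{\min}(x)\ge\Bel(x)$ for the other case). For the reverse inequality I would exhibit near-optimal admissible pairs, i.e. construct explicit $f,g$ (equivalently, an explicit optimal splitting strategy on the dyadic tree, following the characteristics of the degenerate PDE) that push $\av{|g|^p}J$ up to $\Bel(x)$; on the lower boundary the extremal is the constant pair, and in the interior one moves along the $F_p$-level curve toward the boundary. The main obstacle I anticipate is exactly this optimality/sharpness half: one must show that the foliation of $\Omega$ by the ``extremal trajectories'' coming from $F_p$ genuinely fills the domain and that the associated test functions are admissible with the right averages; the Hessian sign computation in step (ii) — in particular verifying that the glued $F_p$ yields a concave-along-admissible-directions $\Bel$ across the seam $z_1=(p^*-1)z_2$, where the two branches of~\eqref{Fp} meet and where~\eqref{op} is relevant — is the other technically delicate point, since a sign error there would break the supersolution property. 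The case $p=2$, where $\Bel=x_2^2+x_3-x_1^2$ and $F_2(z_1,z_2)=z_1^2-z_2^2$, serves as a sanity check: the defining equation reads $x_1^2-x_2^2 = x_3 - \Bel$, which indeed gives the formula in Remark~3.
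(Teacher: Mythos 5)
Your plan is essentially the paper's own proof: the paper also verifies that the implicitly defined candidate satisfies the restricted (zigzag) concavity/convexity and the boundary condition, iterates the main inequality down the dyadic tree (Lemma~\ref{est}) for one inequality, and obtains the reverse by constructing near-optimal test pairs along the extremal trajectories of the degenerate Monge--Amp\`ere foliation (Lemma~\ref{oppos-est}), including the delicate Hessian-sign and $C^1$-gluing check across the seam $z_1=(p^*-1)z_2$ that you flag. One small correction: the infinitesimal form of the main inequality is concavity (resp.\ convexity) of $\Bel$ along \emph{all} directions with $|dx_1|=|dx_2|$ and $dx_3$ arbitrary (the $2\times2$ Hessians in $(y_1,y_3)$ and $(y_2,y_3)$), not merely on a cone constrained by $dx_3\ge 2x_1\,dx_1$.
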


\noindent{\bf Remark.} The reader can take a look at formulae (5.23)--(5.27) on
page 660 of~\cite{Bu1} and recognize that this is how Burkholder describes
$\Bel_{\max}$. The same is true for $\Bel_{\min}$.

\section{How to find Bellman functions}

We start from deducing the main inequality for Bellman functions. Introduce new
variables $y_1=\half(x_2+x_1)$, $y_2=\half(x_2-x_1)$, and $y_3=x_3$. In terms
of the new variables we define a function $\M$,
\begin{equation*}
\M(y_1,y_2,y_3)=\Bel(x_1,x_2,x_3)=\Bel(y_1-y_2,y_1+y_2,y_3)\,,
\end{equation*}
on the domain
\begin{equation*}
\Xi=\{y=(y_1,y_2,y_3)\colon\ y_3\ge0,\ |y_1-y_2|^p\le y_3\}\,.
\end{equation*}

Since the point of the boundary $x_3=|x_1|^p$ ($y_3=|y_1-y_2|^p$) occurs for
the only constant test function $f=x_1$ (and therefore then $g=x_2$ is a
constant function as well), we have
\begin{equation*}
\Bel(x_1,x_2,|x_1|^p)=|x_2|^p\,,
\end{equation*}
or
\begin{equation}
\M(y_1,y_2,|y_1-y_2|^p)=|y_1+y_2|^p\,.
\label{bc}
\end{equation}

Note that the function $\Bel$ is even with respect of $x_1$ and $x_2$, i.e.,
\begin{equation*}
\Bel(x_1,x_2,x_3)=\Bel(-x_1,x_2,x_3)=\Bel(x_1,-x_2,x_3)\,.
\end{equation*}
It follows from the definition of $\Bel$ if we consider the test functions
$\tilde f=-f$ for the first equality and $\tilde g=-g$ for the second one. For
the function $\M$ this means that we have the symmetry with respect to the
lines $y_1=\pm y_2$
\begin{equation}
\M(y_1,y_2,y_3)=\M(y_2,y_1,y_3)=\M(-y_1,-y_2,y_3)\,.
\label{symmetry}
\end{equation}
Therefore, it is sufficient to find the function $\Bel$ in the domain
\begin{equation}
\label{omegaplus}
\Omega_+=\Omega_+(p)=\{x=(x_1,x_2,x_3):\ x_i\ge0,\ |x_1|^p\le x_3\}\,,
\end{equation}
or the function $\M$ in the domain
\begin{equation}
\label{Xplus}
\Xi_+=\{y=(y_1,y_2,y_3)\colon\ y_1\ge0,\ -y_1\le y_2\le y_1,\ (y_1-y_2)^p\le y_3\}\,.
\end{equation}
Then we get the solution in the whole domain by putting
$$\Bel(x_1,x_2,x_3)=\Bel(|\,x_1|,|\,x_2|,x_3)\,.$$

Due to the symmetry~\eqref{symmetry} we have the following boundary conditions
on the ``new part'' of the boundary $\partial\Xi_+$:
\begin{equation}
\label{bc1}
\begin{aligned}
\frac{\partial\M}{\partial y_1}=\frac{\partial\M}{\partial y_2}\qquad&\text{on
the hyperplane }y_2=y_1\,,
\\
\frac{\partial\M}{\partial y_1}=-\frac{\partial\M}{\partial y_2}\qquad&\text{on
the hyperplane }y_2=-y_1\,.
\end{aligned}
\end{equation}

If we consider the family of test functions $\tilde f=\tau f$, $\tilde g=\tau
g$ together with $f$ and $g$ we come to the following homogeneity condition
\begin{equation*}
\Bel(\tau x_1,\tau x_2,\tau^p x_3)=\tau^p\Bel(x_1,x_2,x_3)\,,
\end{equation*}
or
\begin{equation*}
\M(\tau y_1,\tau y_2,\tau^p y_3)=\tau^p\M(y_1,y_2,y_3)\,.
\end{equation*}
We shall use this property in the following form: take derivative with respect
to $\tau$ and put $\tau=1$
\begin{equation}
y_1\frac{\partial\M}{\partial y_1}+y_2\frac{\partial\M}{\partial
y_2}+py_3\frac{\partial\M}{\partial y_3}=p\M(y_1,y_2,y_3)\,.
\label{homogen}
\end{equation}

Let us fix two points $x^\pm\in\Omega$ such that $|x^+_1-x^-_1|=|x^+_2-x^-_2|$,
for the corresponding points $y^\pm\in\Xi$ this means that either
$y^+_1=y^-_1$, or $y^+_2=y^-_2$. Then for an arbitrarily small number $\ve>0$
by the definition of the Bellman function $\Bel=\Bel_{\max}$ there exist two
couples of test functions $f^\pm$ and $g^\pm$ on the intervals $I^\pm$ such
that $\av{f^\pm}{I^\pm}=x^\pm_1$, $\av{g^\pm}{I^\pm}=x^\pm_2$,
$\av{|f^\pm|^p}{I^\pm}=x^\pm_3$, and $\av{|g^\pm|^p}{I^\pm}\ge\Bel(x^\pm)-\ve$.
On the interval $I=I^+\cup I^-$ we define a pair of test functions $f$ and $g$
as follows $f|I^\pm=f^\pm$, $g|I^\pm=g^\pm$. This is a pair of test functions
that corresponds to the point $x=\alpha^+x^++\alpha^-x^-$, where
$\alpha^\pm=|I^\pm|/|I|$, because the property $|x^+_1-x^-_1|=|x^+_2-x^-_2|$
means $|(f,h\ci{I})|=|(g,h\ci{I})|$. This yields
\begin{equation*}
\Bel(x)\ge\av{|g|^p}I=\alpha^+\av{|g^+|^p}I^++\alpha^-\av{|g^-|^p}I^-\ge
\alpha^+\Bel(x^+)+\alpha^-\Bel(x^-)-\ve\,.
\end{equation*}
Since $\ve$ is arbitrary we conclude
\begin{equation}
\label{main-max}
\Bel(x)\ge\alpha^+\Bel(x^+)+\alpha^-\Bel(x^-)\,.
\end{equation}
For the function $\Bel=\Bel_{\min}$ we can get in a similar way
\begin{equation}
\label{main-min}
\Bel(x)\le\alpha^+\Bel(x^+)+\alpha^-\Bel(x^-)\,.
\end{equation}

Recall that this is not quite concavity (convexity) condition, because we have
the restriction $|x^+_1-x^-_1|=|x^+_2-x^-_2|$. But in terms of the function
$\M$
\begin{equation*}
\begin{gathered}
\M_{\max}(y)\ge\alpha^+\M_{\max}(y^+)+\alpha^-\M_{\max}(y^-)\,,
\\
\M_{\min}(y)\le\alpha^+\M_{\min}(y^+)+\alpha^-\M_{\min}(y^-)\,,
\end{gathered}
\end{equation*}
when either $y_1=y^+_1=y^-_1$, or $y_2=y^+_2=y^-_2$, we indeed have the
concavity (convexity) of the function $\M$ with respect to $y_2$, $y_3$ under a
fixed $y_1$, and with respect to $y_1$, $y_3$ under a fixed $y_2$.

Since the domain is convex, under the assumption that the function $\Bel$ are
sufficiently smooth these conditions of concavity (convexity) are equivalent to
the differential inequalities
\begin{equation}
\begin{pmatrix}
\M_{y_1y_1} & \M_{y_1y_3}\\
\M_{y_3y_1} & \M_{y_3y_3}
\end{pmatrix} \le 0\,,\quad
\begin{pmatrix}
\M_{y_2y_2} & \M_{y_2y_3}\\
\M_{y_3y_2} & \M_{y_3y_3}
\end{pmatrix} \le 0\,,\quad
\forall y\in \Xi\,,
\label{main_max}
\end{equation}
for $\M=\M_{\max}$ (here
$\M_{y_iy_j}$ stand for the partial derivatives
$\displaystyle\frac{\partial^2\M}{\,\partial y_i\partial y_j}$) and
\begin{equation}
\begin{pmatrix}
\M_{y_1y_1} & \M_{y_1y_3}\\
\M_{y_3y_1} & \M_{y_3y_3}
\end{pmatrix} \ge 0\,,\quad
\begin{pmatrix}
\M_{y_2y_2} & \M_{y_2y_3}\\
\M_{y_3y_2} & \M_{y_3y_3}
\end{pmatrix} \ge 0\,,\quad
\forall y\in \Xi\,,
\label{main_min}
\end{equation}
for $\M=\M_{\min}$.

Extremal properties of the Bellman function requires for one of matrices
in~\eqref{main_max} and~\eqref{main_min} to be degenerated. So we arrive at the
Monge--Amp\`ere equation:
\begin{equation}
\label{MA}
\M_{y_iy_i}\M_{y_3y_3}=(\M_{y_iy_3})^2
\end{equation}
either for $i=1$ or for $i=2$. To find a candidate $M$ for the role of the true
Bellman function $\M$ we shall solve this equation. After finding this solution
we shall prove that $M=\M$.

The method of solving homogeneous Monge--Amp\`ere equation is described, for
example, in~\cite{VaVo}, \cite{VaVo2},~\cite{SlSt}. In particular we know that the solution of the
Monge--Amp\`ere equation has to be of the form
\begin{equation}
M=t_iy_i+t_3y_3+t_0\,,
\label{linear_form}
\end{equation}
where $t_k=M_{y_k}$, $k=1,2,3$. The solution $M$ is linear along the lines (let
us call them {\it extremal trajectories})
\begin{equation}
y_idt_i+y_3dt_3+dt_0=0\,.
\label{traject}
\end{equation}
One of the ends of the extremal trajectory has to be a point on the boundary
$y_3=|y_1-y_2|^p$, where constant functions are the only test functions
corresponding to these points. Denote this point by $U=(y_1,u,(y_1-u)^p)$. Note
that we write $(y_1-u)^p$ instead of $|y_1-u|^p$ because the domain $\Xi_+$ is
under consideration. For the second end of the extremal trajectory we have four
possibilities
\begin{itemize}
\item[1)] it belongs to the same boundary $y_3=(y_1-y_2)^p$;
\item[2)] it is at infinity $(y_1,y_2,+\infty)$, i.e., the extremal lines goes
parallel to the $y_3$-axis;
\item[3)] it belongs to the boundary $y_2=y_1$;
\item[4)] it belongs to the boundary $y_2=-y_1$.
\end{itemize}

The first possibility gives us no solution.  Namely, we have the following
\begin{prop}
If $p\ne2$, then the function $\Bel_{\max}$ cannot be equal to $B(x)=M(y)$,
where $M$ is the solution of the Monge--Amp\`ere equation~\eqref{MA} such that
one of its extremal trajectory is of type $1)$ above. The same claim holds for
$\Bel_{\min}$.
\end{prop}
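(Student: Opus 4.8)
The plan is to assume, for contradiction, that an extremal trajectory of type 1) occurs, i.e.\ a straight segment inside $\Xi_+$ whose two endpoints are $U=(y_1,u,(y_1-u)^p)$ and $V=(v_1,v_2,(v_1-v_2)^p)$, both on the parabolic boundary $y_3=(y_1-y_2)^p$, and along which the candidate $M$ is linear while the matrix $\begin{pmatrix}\M_{y_iy_i}&\M_{y_iy_3}\\\M_{y_3y_i}&\M_{y_3y_3}\end{pmatrix}$ degenerates (either $i=1$ or $i=2$). By the remarks preceding the Proposition, an extremal trajectory with $i$ fixed lies in a plane $y_i=\const$, so the segment $UV$ must satisfy $u_i=v_i$ for the relevant index; combined with both endpoints lying on the boundary this pins down the geometry of $UV$ up to the free parameters. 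First I would write down what linearity of $M$ along $UV$ forces: since $M$ agrees with the boundary data \eqref{bc}, namely $\M(y_1,y_2,(y_1-y_2)^p)=(y_1+y_2)^p$, the restriction of $M$ to the segment is the affine interpolant of $|y_1+y_2|^p$ between its two endpoint values, and at the same time $M$ restricted to the boundary curve between $U$ and $V$ is the function $(y_1+y_2)^p$ itself.

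Next I would exploit that the extremal trajectory is a \emph{chord} of the boundary surface along which the (convex, by \eqref{bc}, since $p>1$) function $(y_1+y_2)^p$ is reproduced linearly — but the boundary surface is curved, so the chord and the boundary arc do not coincide unless the data is affine along that arc. The key computation is to parametrize the boundary arc joining $U$ and $V$ (using the constraint $y_i=\const$ to reduce to a one-parameter family) and to check that the function $s\mapsto (y_1(s)+y_2(s))^p$ is affine in the trajectory parameter precisely when $p=2$: for $p\neq2$ the second derivative of this composition along the arc is a nonzero multiple of $(y_1+y_2)^{p-2}$ times a positive geometric factor, hence cannot vanish on a segment of positive length. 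This contradicts the requirement, coming from the Monge--Amp\`ere structure \eqref{linear_form}--\eqref{traject}, that $M$ be genuinely affine on $UV$ while matching the boundary values at both ends. I would also use the homogeneity \eqref{homogen} to normalize one of the parameters, reducing the verification to a single scalar identity in one variable, which makes the "$p=2$ only" dichotomy transparent.

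The main obstacle, and where care is needed, is handling the two cases $i=1$ and $i=2$ uniformly and making sure the degenerate direction of the Hessian really is tangent to the segment $UV$ (this is what \eqref{linear_form} encodes: $M=t_iy_i+t_3y_3+t_0$ with $t_k=M_{y_k}$ constant along the trajectory). One must verify that a type 1) trajectory cannot degenerate to a single point (which would be vacuous) and cannot be tangent to the boundary surface in a way that evades the convexity argument; the homogeneity and the symmetry \eqref{symmetry} reduce these degenerate sub-cases to the already-settled $p=2$ situation. Finally, the same argument applies verbatim to $\Bel_{\min}$: there $(y_1+y_2)^p$ plays the same role but with the opposite concavity constraint \eqref{main_min}, and the identical second-derivative computation shows affineness along the arc fails for $p\neq2$, so no solution of type 1) can coincide with $\Bel_{\min}$ either.
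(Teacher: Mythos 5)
There is a genuine gap, and it sits at the heart of your argument. You claim that linearity of $M$ along a chord $[U,V]$ with both endpoints on the parabolic boundary forces the boundary datum $(y_1+y_2)^p$ to be affine along the boundary \emph{arc} between $U$ and $V$. No such implication holds: the chord and the arc are different sets, and linearity of $M$ on the chord only uses the values of the boundary data at the two endpoints; at interior points of the chord $M$ is not required to equal $(y_1+y_2)^p$, and concavity (or convexity) in the slice $y_i=\const$ gives inequalities between chord and arc values, not the equality you need. The case $p=2$ exposes the problem decisively: there $\Bel(x)=x_2^2+x_3-x_1^2$ \emph{is} linear on every such chord, even though $(y_1+y_2)^2$ is certainly not affine along the boundary arc; moreover the second derivative you propose to compute, a nonzero multiple of $(y_1+y_2)^{p-2}$, does not vanish at $p=2$ either, so your computation cannot produce the claimed ``affine precisely when $p=2$'' dichotomy. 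In short, the purported contradiction never materializes, and the argument proves nothing for any $p$.

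The deeper issue is that the Proposition is a statement about the Bellman function defined by optimization over admissible pairs, and the obstruction is not visible from the PDE and boundary data alone. The paper's proof goes through test functions: if such a chord were extremal, the value at its midpoint would be $\tfrac12\bigl(|x_2+a|^p+|x_2-a|^p\bigr)$, realized by the two-valued pair $f=x_1+ah\ci I$, $g=x_2\pm ah\ci I$; one then constructs a competing admissible pair $\tilde f=x_1+a\phi$, $\tilde g=x_2+a\psi$ (with $\phi$ having the same distribution as $h\ci I$ but a different Haar coefficient, $\psi=\phi-h\ci I$) corresponding to the same point $x$, and computes $\av{\,|\tilde g|^p}I-\av{\,|g|^p}I=x_2^p\lambda_p(a/x_2)$ with $\lambda_p(\alpha)=\tfrac18p(p-1)(p-2)(p-3)\alpha^4+O(\alpha^6)$, plus a Pogorelov-type reduction to short chords, a swapped construction for the complementary ranges of $p$, and a separate construction for $p=3$ where $\lambda_3\equiv0$. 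The appearance of the factor $(p-2)(p-3)$ and the need for a special argument at $p=3$ show that the true obstruction is of fourth order in the chord length and is invisible to any slice-wise curvature count of the kind you propose; to repair your proof you would have to bring the sup/inf definition of $\Bel$ (i.e.\ competitor martingale-transform pairs) into play.
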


\begin{proof}
To check this it is sufficient to verify that the test functions of the type
$\alpha+\beta h\ci{I}(t)$ cannot be an extremal function of our problem with
the only exception of $p=2$, when the situation is trivial:
$\Bel_{\max}(x)=\Bel_{\min}(x)=x_3+x_2^2-x_1^2$, and any pair of test function
is extremal. We will show that the Bellman functions being solution of the
homogeneous Monge-Amp\`ere equation cannot be linear on a chord $[x^-, x^+]$
connecting two points $x^\pm$ on the boundary $\partial\Omega$, i.e. such a
chord cannot be an extremal trajectory of our Monge-Amp\`ere equation.

We assume now that two points $x^\pm\in\Omega_+$ such that
$|x_1^+-x_1^-|=|x_2^+-x_2^-|$ are on the boundary $x_3^\pm=(x_1^\pm)^p$ and
$x=\half(x^++x^-)$. We need to show that $\half((x_2^+)^p+(x_2^-)^p)$ can be
the value neither of $\Bel_{\max}(x)$ nor of $\Bel_{\min}(x)$.

Without lost of generality we may assume that $x_1^+>x_1^-$. Let us denote
$a:=\half(x_1^+-x_1^-)$ then
$$
x_1^\pm=x_1\pm a\qquad x_2^\pm=x_2\pm\sigma a\,,
$$
where $\sigma=\pm1$ depending on the direction of our chord: it can be either
in the plane $x_1-x_2=\const$ (and then $\sigma=1$) or in the plane
$x_1+x_2=\const$ (and then $\sigma=-1$). The pair of the test functions $f,g$
on $I=[0,1]$ that gives the value
$$
A(a)=:\av{|g|^p}I=\half\big(|\,x_2^+|^p+|\,x_2^-|^p\big)=
\half\big(|x_2+a|^p+|x_2-a|^p\big)
$$
is
$$
f=x_1+ah\ci{I}\qquad g=x_2+\sigma ah\ci{I}\,.
$$
First of all we assume that $x^\pm\in\Omega_+$, but $x^\pm_i\ne0$ i.e. $x_1\pm
a>0$, $x_2\pm a>0$, because if one of $x_i^\pm$ is zero we are in the cases
either $3)$ or $4)$ listed before Proposition. Our aim will be to find another
pair of test functions $\tilde f,\tilde g$ corresponding to the same point $x$,
but with $\av{|\tilde g|^p}I$ either bigger than $A(a)$ (and then $A(a)$ cannot
be the value of $\Bel_{\max}(x)$) or less than $A(a)$ (and then $A(a)$ cannot
be the value of $\Bel_{\min}(x)$).

Let as make here two remarks. First, we see that the expression $A(a)$ does not
depend on the direction $\sigma$. Therefore, the construction of the desired
$\tilde f,\tilde g$ ensures us that the point $x$ cannot be the center of an
extremal trajectory with two ends on $\partial\Omega$ in any direction
$\sigma=\pm1$. Secondly, we note that it is not obligatory to look for $\tilde
f,\tilde g$ for all $a\in(0,\max\{x_1,x_2\})$, it sufficient to do this for
small values of $a/x_2$.

Indeed, suppose that the chord $L=[x^-,x^+]$ represents an extremal trajectory
of the corresponding Monge--Amp\`ere equation. Let us consider the ``crescent''
between the chord $L$ and the boundary $\partial\Omega$. It should be filled in
by chords on which $\Bel(x)$ are linear (this is the property of the solutions
of the homogeneous Monge--Amp\`ere equation expressed in Pogorelov's theorem,
see~\cite{Pog}). Among these chords we can take one (say $\tilde L=[\tilde
x^-,\tilde x^+]$) of arbitrarily small length (arbitrarily small value of
$\tilde a/\tilde x_2$). Therefore, we can work with a new point $\tilde x$ and
new chord $\tilde L$: would we show that $\tilde L$ cannot be an extremal
trajectory, the chord $L$ could not be one either.

To construct the desired pair $\tilde f,\tilde g$ we use the following family
of functions $\phi_s$ equal to $1$ on $[0, 1/2-s]\cup [1-s,1]$ and to $-1$ on
$(1/2-s, 1-s)$. Let us note that all $\phi_s$ have the same distribution function as  $h\ci{I}$,
and
$$
(\phi_s, h\ci{I}) = 1-4s\,.
$$
We are interested in $\phi:=\phi_{1/8}$. Then
$$
(\phi, h\ci{I}) = \half\,,
$$
and $\psi = \phi -h\ci{I}$ has
$$
(\psi, h\ci{I}) = -\half\,,\qquad(\psi, h\ci{\!J})=(\phi, h\ci{\!J})
$$
for all other dyadic $J$. So $\psi$ is a martingale transform of $\phi$ (it is
equal to $0$ on $[0, 3/8]\cup[1/2, 7/8]$ and to $\pm2$ on two intervals
$(7/8,1)$, $(3/8, 1/2)$) and we can examine the pair
$$
\tilde f=x_1+a\phi,\qquad\tilde g=x_2+a\psi\,.
$$
Since $\tilde f$ and $f$ have the same distribution function, we have $\av{\,|\tilde f|^p}I =
\av{\,|f|^p}I=x_3$, i.e., $\tilde f,\tilde g$ is a pair of test functions
corresponding to the same point $x$.

To investigate the difference $\av{\,|\tilde g|^p}I-\av{\,|g|^p}I$ we use the
function
$$
\lambda_p(\alpha):=\frac18\big((1+2\alpha)^p+(1-2\alpha)^p\big)+\frac34-
\big((1+\alpha)^p + (1-\alpha)^p\big)\,.
$$
Since
$$
\lambda_p(\alpha)=\frac18p(p-1)(p-2)(p-3)\alpha^4+O(\alpha^6)\,,
$$
we have $\lambda_p(\alpha)>0$ for small $\alpha$ if $1<p<2$ or $p>3$ and
$\lambda_p(\alpha)<0$ for small $\alpha$ if $2<p<3$. Recall that
$$
\tilde g(t)=
\begin{cases}
\quad x_2\quad&0<t<\frac38
\\
x_2-2a&\frac38<t<\half
\\
\quad x_2&\half<t<\frac78
\\
x_2+2a&\frac78<t<1\,,
\end{cases}
$$
therefore, $\av{\,|\tilde g|^p}I=
\frac18\big((x_2+2a)^p+(x_2-2a)^p\big)+\frac34x_2^p$ and
$$
\av{\,|\tilde g|^p}I-\av{\,|g|^p}I=x_2^p\lambda\big(\frac a{\,x_2}\big).
$$
For small $\alpha$ we have a desired example for $\Bel_{\max}$ if
$p\in(1,2)\cup(3,\infty)$ (because $\lambda_p>0$ and for $\Bel_{\min}$ if
$p\in(2,3)$ (because $\lambda_p<0$).

Now we interchange in a sense the roles of $\tilde f$ and $\tilde g$: instead of
$a$ we take a new parameter, say $\tilde a$, and put
$$
\tilde f=x_1+\tilde a\psi,\qquad\tilde g=x_2+\tilde a\phi\,.
$$
Since we have now
$$
\av{\,|\tilde f|^p}I-\av{\,|f|^p}I=x_2^p\lambda\big(\frac {\tilde a}{\,x_1}\big)
$$
and the function
$$
t\mapsto \av{\,|x +t\psi|^p}I=\frac18(|x-2t|^p+|x+2t|^p)+\frac34|x|^p
$$
is increasing in $t>0$ from $|x|^p$ till infinity, we can find $\tilde a$,
$\tilde a>a$ for $p\in(2,3)$ and $\tilde a<a$ for $p\in(1,2)\cup(3,\infty)$,
such that
$$
\av{\,|\tilde f|^p}I=\av{\,|f|^p}I=x_3.
$$
For this $\tilde a$ we get the desired pair of test function, because the
function
$$
t\mapsto \av{|x+t\phi|^p}I= \frac12 (|x-t|^p +|x+t|^p)
$$
is also increasing in $t>0$, and therefore, we have
$$
\av{\,|\tilde g|^p}I=\av{\,|x_2+\tilde a\phi|^p}I>\av{\,|x_2+\tilde a\phi|^p}I=
\av{\,|x_2+\tilde ah\ci{I}|^p}I=\av{\,|g|^p}I
$$
if $p\in(2,3)$ and the opposite inequality if $p\in(1,2)\cup(3,\infty)$ (and
$\tilde a<a$).

This construction failed for $p=3$ because $\lambda_3(\alpha)=0$ for all
$\alpha\in(0,\half)$. To avoid this difficulty we modify the function $\psi$,
namely, we take $\psi=\phi+h\ci{I^+}/\sqrt2$, i.e.,
$$
\psi(t)=
\begin{cases}
\phantom{-}1,\quad&0<t<\frac38\,,
\\
-1,&\frac38<t<\half\,,
\\
\phantom{-}0,&\half<t<\frac34\,,
\\
-2,&\frac34<t<\frac78\,,
\\
\phantom{-}0,&\frac78<t<1\,.
\end{cases}
$$
The function $\psi$ is a martingale transform of $\phi$, since
$$
(\,\phi,h\ci{I^+}\!)=-\frac1{2\sqrt2}\,,\qquad(\,\psi,h\ci{I^+}\!)=\frac1{2\sqrt2}\,.
$$
Now we put
$$
\tilde f=x_1+a\phi,\qquad\tilde g=x_2\pm a\psi\,.
$$
As before we have $\av{\,|\tilde f|^3}I=\half\big((x-a)^3+(x+a)^3\big)
=\av{\,|f|^3}I$, but
$$
\av{\,|\tilde g|^3}I=x_2^3+6x_2a^2\mp\frac34a^3=\av{\,|g|^3}I\mp\frac34a^3\,,
$$
therefore, by choosing the sign in the definition of $\tilde g$ we are able to
increase as well to decrease the value $\av{\,|g|^3}I$, hence it is neither the
value of $\Bel_{\max}(x)$ nor $\Bel_{\min}(x)$. Proposition is completely
proved.
\end{proof}

Now we check the second possibility among the possibilities 1)--4) listed right
before the Proposition. Since the extremal line is parallel to the $y_3$-axis,
the Bellman function has to be of the form
\begin{equation*}
M(y)=A(y_1,y_2)+C(y_1,y_2)y_3\,.
\end{equation*}
Any pair of inequalities both~\eqref{main_max} and~\eqref{main_min} implies
$M_{y_iy_i}M_{y_3y_3}-(M_{y_iy_3})^2\ge0$. Since $M_{y_3y_3}=0$, this yields
$M_{y_iy_3}=\frac{\partial C}{\partial y_i}=0$, i.e., $C$ is a constant. From
the boundary condition~\eqref{bc} we get
\begin{equation*}
A(y_1,y_2)+C(y_1-y_2)^p=(y_1+y_2)^p\,,
\end{equation*}
whence
\begin{equation*}
A(y_1,y_2)=(y_1+y_2)^p-C(y_1-y_2)^p\,,
\end{equation*}
and
\begin{equation}
M(y)=(y_1+y_2)^p+C(y_3-(y_1-y_2)^p)\,, \label{solut2M}
\end{equation}
or
\begin{equation}
B(x)=x_2^p+C(x_3-x_1^p)\,. \label{solut2B}
\end{equation}

Let us note that this solution cannot satisfy necessary conditions in the whole
domain $\Xi_+$ except the case $p=2$. The constant $C$ must be positive
(otherwise the extremal lines cannot tend to infinity along $y_3$-axes, because
$M$ must be a nonnegative function). Therefore, the straight line
\begin{equation*}
y_1+y_2=C^{\frac1{p-2}}(y_1-y_2),\,\,\text{or}\,\, x_2=C^{\frac1{p-2}}y_1
\end{equation*}
splits $\Xi_+$ in two subdomains, in one of which the derivatives
\begin{equation*}
\frac{\partial^2 M}{\partial y_1^2}=\frac{\partial^2 M}{\partial y_2^2}
=p(p-1)\Big((y_1+y_2)^{p-2}-C(y_1-y_2)^{p-2})\Big)
\end{equation*}
is positive (i.e., it could be a candidate for $\Bel_{\min}$), and in another
one is negative (i.e., it could be a candidate for $\Bel_{\max}$).

\medskip

Thus, this simple solution cannot give us the whole Bellman function and we
need to continue the consideration of the possibilities~3) and~4) (listed right
before the Proposition. Till now we have not fixed which of two matrices
in~\eqref{main_max} or in~\eqref{main_min} is degenerated, i.e., what is $i$ in
the Monge--Amp\`ere equation~\eqref{MA}, because for the vertical extremal
lines both these equations are fulfilled. Now, when considering possibility~3)
or~4), we need to investigate separately both Monge--Amp\`ere
equations~\eqref{MA}. We shall refer to these cases as~$3_i\!)$ and $4_i\!)$.

Let us start with simultaneous consideration of the cases~$3_1\!)$ and~$4_1\!)$
(we recall that this means that $y_2$ is fixed). We look for a function
\begin{equation*}
M=t_1y_1+t_3y_3+t_0
\end{equation*}
on the domain $\Xi_+$, which is linear along the extremal lines
\begin{equation*}
y_1dt_1+y_3dt_3+dt_0=0\,.
\end{equation*}

Now one end point of our extremal line $V=(v,y_2,(v-y_2)^p)$ belongs to the
boundary $y_3=|y_1-y_2|^p$ and the second end point $W=(|y_2|,y_2,w)$ is on the
boundary $y_1=|y_2|$, where we have boundary condition~\eqref{bc1}. Due to the
symmetry~\eqref{symmetry}, on the boundary $y_1=y_2$ (this means that our fixed
$y_2\ge 0$) we have
\begin{equation*}
\frac{\partial M}{\partial y_2}=\frac{\partial M}{\partial y_1}=t_1\,,
\end{equation*}
and
\begin{equation*}
\frac{\partial M}{\partial y_2}=-\frac{\partial M}{\partial y_1}=-t_1\,,
\end{equation*}
on the boundary $y_1=-y_2$ (this means that our fixed $y_2<0$). In both cases
\begin{equation*}
y_2\frac{\partial M}{\partial y_2}=y_1\frac{\partial M}{\partial y_1}=|y_2|t_1\,,
\end{equation*}
and therefore~\eqref{homogen}  and~\eqref{linear_form} imply
\begin{equation*}
2t_1|y_2|+pwt_3=pM(W)=pt_1|y_2|+pwt_3+pt_0\,,
\end{equation*}
whence
\begin{equation*}
t_0=\big(\frac2p-1\big)t_1|y_2|\,.
\end{equation*}
This gives the formula for $t_0(t_1)$ (remember that $y_2$ is fixed as we
consider the cases $3_1)$, $4_1)$ now). Thus, we get
\begin{equation}
M(y)=\Big[\,y_1+\big(\frac2p-1\big)|y_2|\Big]t_1+y_3t_3\,. \label{M1}
\end{equation}

Since $dt_0=\big(\frac2p-1\big)|y_2|\,dt_1$, the equation of the extremal
trajectories~\eqref{traject} takes the form
\begin{equation}
\Big[\,y_1+\big(\frac2p-1\big)|y_2|\Big]dt_1+y_3\,dt_3=0\,, \label{traject1}
\end{equation}
and we can rewrite~\eqref{M1} as follows
\begin{equation*}
M(y)=\Big(t_3-t_1\frac{dt_3}{dt_1}\Big)y_3\,.
\end{equation*}
We see that the expression $M(y)/y_3$ is constant along the trajectory and we
can find it evaluating at the point $V$, where the boundary
condition~\eqref{bc} is known:
\begin{equation}
M(y)=\left(\frac{v+y_2}{v-y_2}\right)^{\!p}\!\!y_3\,, \label{solut1}
\end{equation}
where $v=v(y_1,y_2,y_3)$ satisfies the following equation:
\begin{equation}
\frac{y_1+\big(\frac2p-1\big)|y_2|}{y_3}=\frac{v+\big(\frac2p-1\big)|y_2|}{(v-y_2)^p}\,,
\label{extr1}
\end{equation}
because the point $V=(v,y_2,(v-y_2)^p)$ is on the extremal
line~\eqref{traject1}. We even shall not check under what conditions
equation~\eqref{extr1} has a solution and when it is unique. Later we show that
in any case the function $M$ we have found cannot be the Bellman function we
are interested in, because neither condition~\eqref{main_max}
nor~\eqref{main_min} can be fulfilled: the matrix $\{M_{y_iy_j}\}_{i,j=2,3}$ is
neither negative definite nor positive definite. We postpone this verification,
because the calculation of the sign of the Hessian matrices is the same for
this solution and another solution of the Monge--Amp\`ere equation that
supplies us with the true Bellman function. And these calculations will be made
simultaneously a bit later. And now we only rewrite our solution in an implicit
form more convenient for calculation.

We introduce
\begin{equation}
\label{omega}
\omega:=\left(\!\frac{M(y)}{y_3}\!\right)^{\frac1p},
\end{equation}
then~\eqref{solut1} yields
\begin{equation}
\label{vomega}
v=\frac{\omega+1}{\omega-1}\,y_2\,.
\end{equation}
Since $v\ge0$ (in fact, recall that we consider now only $y$: $y_1 \ge |y_2|$ domain now, and that $v$
is just the first coordinate of the point $V=(v, y_2, (v-y_2)^p$ in this domain), we have
\begin{equation}
\label{signy2}
\sign y_2=\sign(\omega-1)\,.
\end{equation}
After substitution
of~\eqref{vomega} in~\eqref{extr1} we get
\begin{equation*}
\Big(\frac{2y_2}{\omega-1}\Big)^{\!p}\Big[y_1+\big(\frac2p-1\big)|y_2|\Big]=
y_3\Big[\frac{\omega+1}{\omega-1}y_2+\big(\frac2p-1\big)|y_2|\Big]
\end{equation*}
or
\begin{equation*}
2^p|y_2|^{p-1}\big[py_1+(2-p)|y_2|\big]=
y_3|\omega-1|^{p-1}\big[(\omega+1)p+(2-p)|\omega-1|\big]
\end{equation*}
For the case~$3_1\!)$ we have $y_2>0$ (i.e., $x_2>x_1$, we look for $\omega>1$
or $B>y_3$) and the latter equation can be rewritten in the initial coordinates
as follows
\begin{equation*}
(x_2-x_1)^{p-1}\big[x_2+(p-1)x_1]=
(B_{\phantom3}^{\frac1p}-x_3^{\frac1p})^{p-1}\big[B_{\phantom3}^{\frac1p}+(p-1)x_3^{\frac1p}\big]\,.
\end{equation*}
For the case $4_1\!)$ we have $y_2<0$ (i.e., $x_2<x_1$, we look for $\omega<1$
or $B<y_3$) and the equation takes the form
\begin{equation*}
(x_1-x_2)^{p-1}\big[x_1+(p-1)x_2]=
(x_3^{\frac1p}-B_{\phantom3}^{\frac1p})^{p-1}\big[x_3^{\frac1p}+(p-1)B_{\phantom3}^{\frac1p}\big]
\end{equation*}

Introduce the following function
\begin{equation*}
G(z_1,z_2)=(z_1+z_2)^{p-1}\big[z_1-(p-1)z_2]
\end{equation*}
defined on the half-plane $z_1+z_2\ge0$. Then in the case~$3_1\!)$ we have the
relation
\begin{equation}
\label{G31}
G(x_2,-x_1)=G(B_{\phantom3}^{\frac1p},-x_3^{\frac1p})\,,
\end{equation}
or
\begin{equation*}
G(y_2+y_1,y_2-y_1)=y_3G(\omega,-1)\,.
\end{equation*}
In the case~$4_1\!)$ we have
\begin{equation}
\label{G41}
G(x_1,-x_2)=G(x_3^{\frac1p},-B_{\phantom3}^{\frac1p})\,,
\end{equation}
or
\begin{equation*}
G(y_1-y_2,-y_1-y_2)=y_3G(1,-\omega)\,.
\end{equation*}

\medskip

Now we have to consider the Monge-Amp\`ere equation~\eqref{MA} in the
cases~$3_2\!)$ and~$4_2)$. This means that we fix $y_1$ now. Let us begin with
the cases~$3_2\!)$, when an extremal line starts at a point
$U=(y_1,u,(y_1-u)^p)$ on our parabola and ends at a point $W=(y_1,y_1,w)$.
Again, the symmetry condition at the point $W$ is
\begin{equation*}
\frac{\partial M}{\partial y_1}=\frac{\partial M}{\partial y_2}=t_2\,,
\end{equation*}
and the homogeneity condition~\eqref{homogen}  plus
condition~\eqref{linear_form} at $W$ yield
\begin{equation*}
2y_1t_2+pwt_3=pM(W)=py_1t_2+pwt_3+pt_0\,,
\end{equation*}
whence
\begin{equation*}
t_0=\big(\frac2p-1\big)y_1t_2\,,
\end{equation*}
and therefore
\begin{equation}
M(y)=\Big[\,y_2+\big(\frac2p-1\big)y_1\Big]t_2+y_3t_3\,. \label{M32}
\end{equation}

Since $dt_0=\big(\frac2p-1\big)y_1\,dt_2$, the equation of the extremal
trajectories takes the form
\begin{equation}
\Big[\,y_2+\big(\frac2p-1\big)y_1\Big]dt_2+y_3\,dt_3=0\,, \label{traject32}
\end{equation}
and we can rewrite~\eqref{M32} as follows
\begin{equation*}
M(y)=\Big(t_3-t_2\frac{dt_3}{dt_2}\Big)y_3\,.
\end{equation*}
Again, from here the expression $M(y)/y_3$ is constant along the trajectory and we can
find it evaluating at the point $U$, where we the boundary condition~\eqref{bc}
is known:
\begin{equation}
M(y)=\left(\frac{y_1+u}{y_1-u}\right)^{\!p}\!\!y_3\,, \label{solut32}
\end{equation}
where $u=u(y_1,y_2,y_3)$ can be found from~\eqref{traject32}:
\begin{equation}
\frac{y_2+\big(\frac2p-1\big)y_1}{y_3}=\frac{u+\big(\frac2p-1\big)y_1}{(y_1-u)^p}\,.
\label{extr32}
\end{equation}

We see that if our extremal line starts at point $U=(y_1,u,(y_1-u)^p$ on our
parabola $u=-\big(\frac2p-1\big)y_1$, then
$y_2=-\big(\frac2p-1\big)y_1=u=\const$, i.e., it is a line parallel to the
$x_3$-axes. This means that no extremal line that ends at the points of the
boundary $y_1=y_2$ can intersect the plane $y_2=-\big(\frac2p-1\big)y_1$. This
follows from the property that extremal trajectories do not intersect.
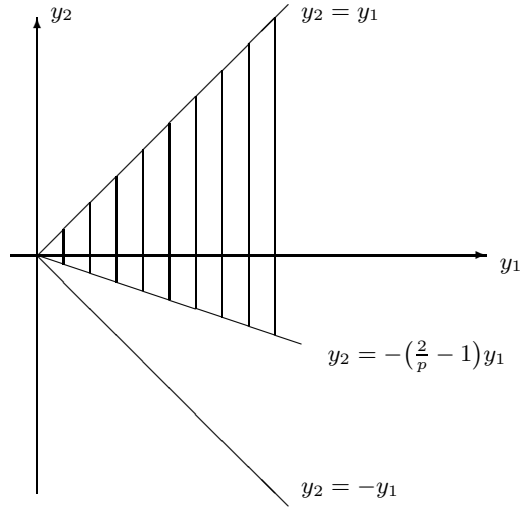
\begin{figure}[ht]
\begin{center}
\begin{picture}(200,200)
\thinlines
\put(20,10){\vector(0,1){180}}
\put(10,100){\vector(1,0){180}}
\put(20,100){\line(1,1){95}}
\put(20,100){\line(1,-1){95}}
\put(20,100){\line(3,-1){100}}
\put(30,110){\line(0,-1){13.3}}
\put(40,120){\line(0,-1){26.6}}
\put(50,130){\line(0,-1){40}}
\put(60,140){\line(0,-1){53.3}}
\put(70,150){\line(0,-1){66.6}}
\put(80,160){\line(0,-1){80}}
\put(90,170){\line(0,-1){93.3}}
\put(100,180){\line(0,-1){106.6}}
\put(110,190){\line(0,-1){120}}
\put(120,190){\footnotesize $y_2=y_1$}
\put(120,10){\footnotesize $y_2=-y_1$}
\put(130,60){\footnotesize $y_2=-\big(\frac2p-1\big)y_1$}
\put(150,80){\footnotesize $$}
\put(25,190){\footnotesize $y_2$}
\put(195,95){\footnotesize $y_1$}
\end{picture}
\caption{Acceptable sector for the case $3_2\!).$}
\label{3_2}
\end{center}
\end{figure}
Therefore, the starting points $U$ with $u\le-\big(\frac2p-1\big)y_1$ cannot be
acceptable for the case under consideration (since these trajectories do not
intersect the plane $y_2=-\big(\frac2p-1\big)y_1$, they cannot have the second
end point on $y_2=y_1$, see~Fig.~\ref{3_2}).

Let us check that equation~\eqref{extr32} has exactly
one solution $u=u(y_1,y_2,y_3)$ in the sector
$-\big(\frac2p-1\big)y_1<y_2<y_1$. Indeed, the function
\begin{equation*}
u\mapsto
y_3\Big[u+\big(\frac2p-1\big)y_1\Big]-(y_1-u)^p\Big[y_2+\big(\frac2p-1\big)y_1\Big]
\end{equation*}
is monotonously increasing for $u<y_1$ and it has the negative value
$-\big(\frac2p y_1)^p\Big[y_2+\big(\frac2p-1\big)y_1\Big]$ at the point
$u=-\big(\frac2p-1\big)y_1$ and the positive value $\frac2p y_1y_3$ at the
point $u=y_1$.

Now we rewrite the  solution~\eqref{solut32} in an implicit form using
notations~\eqref{omega}: $\omega:=\left(\!\frac{M(y)}{y_3}\!\right)^{\frac1p}$.
From~\eqref{solut32} we have
\begin{equation}
u=\frac{\omega-1}{\omega+1}\,y_1\,,
\label{uomega}
\end{equation}
therefore, from~\eqref{extr32} we obtain
\begin{equation*}
2^{-p}y_3(\omega +1)^{p-1}[p(\omega -1) +(2-p)(\omega +1)]=y_1^{p-1}\big[py_2+(2-p)y_1\big]
\end{equation*}
or
$$
2^{-p+1}y_3(\omega+1)^{p-1}
(\omega-p+1)=y_1^{p-1}\big[py_2+(2-p)y_1\big]\,,
$$
which is (using again  notations~\eqref{omega}:
$\omega:=\left(\!\frac{M(y)}{y_3}\!\right)^{\frac1p}$)
\begin{equation*}
(B_{\phantom3}^{\frac1p}+x_3^{\frac1p})^{p-1}\big[B_{\phantom3}^{\frac1p}-(p-1)x_3^{\frac1p}\big]=
(x_1+x_2)^{p-1}\big[x_2-(p-1)x_1\big]\,.
\end{equation*}
In terms of function $G$ this can be rewritten as follows
\begin{equation*}
G(x_2,x_1)=G(B_{\phantom3}^{\frac1p},x_3^{\frac1p})\,,
\end{equation*}
or
\begin{equation*}
G(y_1+y_2,y_1-y_2)=y_3G(\omega,1)\,.
\end{equation*}

\medskip

It remains to examine the possibility $4_2\!)$. Assume that an extremal line
starts at a point $U=(y_1,u,(y_1-u)^p)$ and ends at a point $W=(y_1,-y_1,w)$.
Again, the homogeneity property~\eqref{homogen} at the point $W$ and the
symmetry $\frac{\partial M}{\partial y_1}=-\frac{\partial M}{\partial
y_2}=-t_2$ yield
\begin{equation*}
-2y_1t_2+pwt_3=pM(W)=-py_1t_2+pwt_3+pt_0\,,
\end{equation*}
whence
\begin{equation*}
t_0=\big(1-\frac2p\,\big)y_1t_2\,,
\end{equation*}
and therefore
\begin{equation}
M(y)=\Big[y_2+(1-\frac2p\,)\,y_1\Big]t_2+y_3t_3\,. \label{M42}
\end{equation}

Since $dt_0=(1-\frac2p\,)y_1\,dt_2$, the equation of the extremal trajectories
takes the form
\begin{equation}
\Big[y_2+(1-\frac2p\,)\,y_1\Big]dt_2+y_3\,dt_3=0\,, \label{traject42}
\end{equation}
and we can rewrite~\eqref{M42} as follows
\begin{equation*}
M(y)=\Big(t_3-t_2\frac{dt_3}{dt_2}\Big)y_3\,.
\end{equation*}
Again, the expression $M(y)/y_3$ is constant along the trajectory and from the
boundary condition~\eqref{bc} we get the same expression
\begin{equation}
M(y)=\left(\frac{y_1+u}{y_1-u}\right)^{\!p}\!\!y_3\,. \label{solut42}
\end{equation}
Now $u=u(y_1,y_2,y_3)$ is a solution of the equation
\begin{equation}
\frac{y_2-\big(\frac2p-1)y_1}{y_3}=\frac{u-\big(\frac2p-1\big)y_1}{(y_1-u)^p}
\label{extr42}
\end{equation}
that we get from~\eqref{traject42}. As before, we get trajectories ending at
the plane $y_2=-y_1$ not in the whole domain $\Xi_+$, but only in the sector
$-y_1<y_2<\big(\frac2p-1\big)y_1$ (see~Fig.~\ref{4_2}), and
equation~\eqref{extr42} has a unique solution for every point from this sector.
\begin{figure}[ht]
\begin{center}
\begin{picture}(200,200)
\thinlines
\put(20,10){\vector(0,1){180}}
\put(10,100){\vector(1,0){180}}
\put(20,100){\line(1,1){95}}
\put(20,100){\line(1,-1){95}}
\put(20,100){\line(3,1){100}}
\put(30,90){\line(0,1){13.3}}
\put(40,80){\line(0,1){26.6}}
\put(50,70){\line(0,1){40}}
\put(60,60){\line(0,1){53.3}}
\put(70,50){\line(0,1){66.6}}
\put(80,40){\line(0,1){80}}
\put(90,30){\line(0,1){93.3}}
\put(100,20){\line(0,1){106.6}}
\put(110,10){\line(0,1){120}}
\put(120,190){\footnotesize $y_2=y_1$}
\put(120,10){\footnotesize $y_2=-y_1$}
\put(130,140){\footnotesize $y_2=\big(\frac2p-1\big)y_1$}
\put(150,80){\footnotesize $$}
\put(25,190){\footnotesize $y_2$}
\put(195,95){\footnotesize $y_1$}
\end{picture}
\caption{Acceptable sector for the case $4_2\!).$} \label{4_2}
\end{center}
\end{figure}
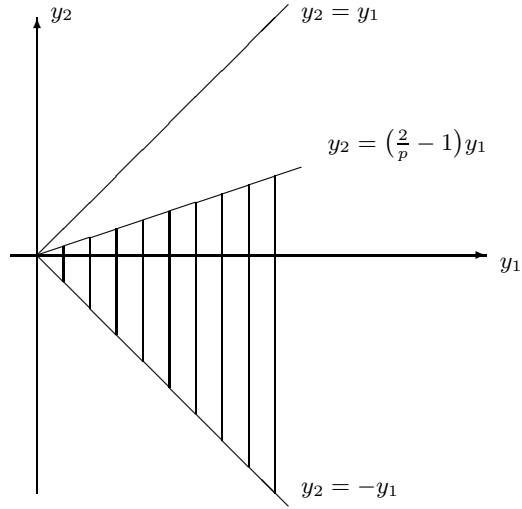
As before, relation~\eqref{uomega} allows us to rewrite the equation of
extremal trajectories~\eqref{extr42} as an implicite expression for $\omega$
(and hence for $M$):
\begin{equation*}
G(x_1,x_2)=G(x_3^{\frac1p},B_{\phantom3}^{\frac1p})\,,
\end{equation*}
or
\begin{equation*}
G(y_1-y_2,y_1+y_2)=y_3G(1,\omega)\,.
\end{equation*}

Now we start the verification which of the obtained solutions satisfies
conditions~\eqref{main_max} or~\eqref{main_min}. We need to calculate $D_i:=
M_{y_iy_i}M_{y_3y_3}-M_{y_iy_3}^2$, $i=1,2$, in four cases
\begin{itemize}
\item[$3_1\!)$]$G(y_1+y_2,-y_1+y_2)=y_3G(\omega,-1)$;
\item[$4_1\!)$]$G(y_1-y_2,-y_1-y_2)=y_3G(1,-\omega)$;
\end{itemize}
\begin{equation}
\label{32}
3_2\!)  \,\,G(y_1+y_2,y_1-y_2)=y_3G(\omega,1);
\end{equation}
\begin{equation}
\label{42}
4_2\!)\,\,G(y_1-y_2,y_1+y_2)=y_3G(1,\omega),
\end{equation}
where $M=y_3\omega^p$. In all situations we have a relation of the form
\begin{equation*}
\Phi(\omega)=\frac{H(y_1,y_2)}{y_3}\,.
\end{equation*}
Till some moment in the future we will not specify the expression for $\Phi$ and $H$, as well
as  for their derivatives, and plug in the specific expression only in the final
result after numerous cancellation. In particular, we introduce
\begin{equation*}
R_1=R_1(\omega):=\frac1{\Phi'}\quad\text{and}\quad
R_2=R_2(\omega):= R'_1=-\frac{\Phi''}{{\Phi'}^2}.
\end{equation*}
We would like to mention here that this idea, allowing us to make calculation shorter,
is taken from the original paper of Burkholder~\cite{Bu1}.

First of all we calculate the partial derivatives of $\omega$:
\begin{align*}
\Phi'\omega_{y_{_3}}=-\frac H{y_3^2}\quad
&\implies\quad\omega_{y_{_3}}=-\frac{R_1H}{y_3^2}\,,
\\
\Phi'\omega_{y_{_i}}=\frac{H_{y_{_i}}}{y_3}\quad
&\implies\quad\omega_{y_{_i}}=\frac{R_1H_{y_{_i}}}{y_3}=\frac{R_1H'}{y_3},\qquad
i=1,2\,.
\end{align*}
Here and further we shall use notation $H'$ for any partial derivative
$H_{y_i}$, $i=1,2$. This cannot cause misunderstanding because only one $i$
participate in calculation of Hessian determinants $D_i$. Moreover, we shall not  mention anymore that
the index $i$ can take two values either $i=1$ or $i=2$.
\begin{align*}
\omega_{y_{_3}y_{_3}}&=-\frac{R_2\omega_{y_{_3}}H}{y_3^2}+2\frac{R_1H}{y_3^3}
=\frac{R_1H}{y_3^4}(R_2H+2y_3)\,,
\\
\omega_{y_{_3}y_{_i}}&=-\frac{R_2\omega_{y_{_i}}H}{y_3^2}-\frac{R_1H'}{y_3^2}
=-\frac{R_1H'}{y_3^3}(R_2H+y_3)\,,
\\
\omega_{y_{_i}y_{_i}}&=\frac{R_2\omega_{y_{_i}}H'}{y_3}+\frac{R_1H''}{y_3}
=\frac{R_1}{y_3^2}(R_2(H')^2+y_3H'')\,.
\end{align*}

Now we pass to the calculation of derivatives of $M=y_3\omega^p$:
\begin{align*}
M_{y_{_3}}&=py_3\omega^{p-1}\omega_{y_{_3}}+\omega^p\,,
\\
M_{y_{_i}}&=py_3\omega^{p-1}\omega_{y_{_i}}\,;
\end{align*}
\begin{align}
M_{y_{_3}y_{_3}}&=py_3\omega^{p-1}\omega_{y_{_3}y_{_3}}+
2p\omega^{p-1}\omega_{y_{_3}}+p(p-1)y_3\omega^{p-2}\omega_{y_{_3}}^2
\notag
\\
\label{M33}
&=\frac{p\omega^{p-2}R_1H^2}{y_3^3}[\omega R_2+(p-1)R_1]\,,
\\
\notag
M_{y_{_3}y_{_i}}&=py_3\omega^{p-1}\omega_{y_{_3}y_{_i}}+
p\omega^{p-1}\omega_{y_{_i}}+p(p-1)y_3\omega^{p-2}\omega_{y_{_3}}\omega_{y_{_i}}
\\
\notag
&=-\frac{p\omega^{p-2}R_1HH'}{y_3^2}[\omega R_2+(p-1)R_1]\,,
\\
\notag
M_{y_{_i}y_{_i}}&=py_3\omega^{p-1}\omega_{y_{_i}y_{_i}}
+p(p-1)y_3\omega^{p-2}\omega_{y_{_i}}^2
\\
\notag
&=\frac{p\omega^{p-2}R_1}{y_3^{\phantom3}}\left([\omega
R_2+(p-1)R_1](H')^2+\omega y_3H''\right)\,.
\end{align}
This yields
\begin{equation}
\label{Di} D_i=M_{y_{_3}y_{_3}}M_{y_{_i}y_{_i}}-M_{y_{_3}y_{_i}}^2=
\frac{p^2\omega^{2p-3}R_1^2H^2H''}{y_3^3}[\omega R_2+(p-1)R_1]\,.
\end{equation}

Notice, that $H'$ disappeared completely.

Now we need to calculate second derivatives of
\begin{equation*}
H(y_1,y_2)=G(\alpha_1y_1+\alpha_2y_2,\beta_1y_1+\beta_2y_2),
\end{equation*}
where
$\alpha_i,\beta_i=\pm1$. And
\begin{align*}
H''&=\frac{\partial^2}{\partial y_i^2}
G(\alpha_1y_1+\alpha_2y_2,\beta_1y_1+\beta_2y_2)
\\
&=\alpha_i^2G_{\!z_{_1}\!z_{_1}}\!\!+2\alpha_i\beta_iG_{\!z_{_1}\!z_{_2}}
\!\!+\beta_i^2G_{\!z_{_2}\!z_{_2}}
\\
&=G_{\!z_{_1}\!z_{_1}}\!\!+G_{\!z_{_2}\!z_{_2}}\!\!\pm2G_{\!z_{_1}\!z_{_2}}\,,
\end{align*}
where the ``$+$'' sign has to be taken if the coefficients in front of $y_i$
are equal and the ``$-$'' sign in the opposite case.

The derivatives of $G$ are simple:
\begin{align*}
G_{\!z_1}\!&=p(z_1+z_2)^{p-2}\big[z_1-(p-2)z_2\big]\,,
\\
G_{\!z_2}\!&=-p(p-1)z_2(z_1+z_2)^{p-2}\,;
\end{align*}
\begin{align*}
G_{\!z_{_1}\!z_{_2}}\!&=p(p-1)(z_1+z_2)^{p-3}\big[z_1-(p-3)z_2\big]\,,
\\
G_{\!z_{_1}\!z_{_2}}\!&=-p(p-1)(p-2)z_2(z_1+z_2)^{p-3}\,,
\\
G_{\!z_{_2}\!z_{_2}}\!&=-p(p-1)(z_1+z_2)^{p-3}\big[z_1+(p-1)z_2\big]\,.
\end{align*}

Note that
$G_{\!z_{_1}\!z_{_1}}\!\!+G_{\!z_{_2}\!z_{_2}}\!\!=2G_{\!z_{_1}\!z_{_2}}$, and
therefore, $H''=4G_{\!z_{_1}\!z_{_2}}$ if $\alpha_i=\beta_i$ and $H''=0$ if
$\alpha_i=-\beta_i$. The first case occurs for $H_{\!y_{_2}\!y_{_2}}$ in
cases~$3_1\!)$, $4_1\!)$ and for $H_{\!y_{_1}\!y_{_1}}$ in cases~$3_2\!)$,
$4_2\!)$. The second case occurs for $H_{\!y_{_1}\!y_{_1}}$ in cases~$3_1\!)$,
$4_1\!)$ and for $H_{\!y_{_2}\!y_{_2}}$ in cases~$3_2\!)$, $4_2\!)$. In fact,
we know that the equality $D_i=0$ has to be fulfilled in the cases~$3_i\!)$
and~$4_i\!)$, because it is just the Monge--Amp\`ere equation we have been
solving.

So we have
\begin{align*}
3_1\!)\quad &z_1=y_1+y_2,&&
\\
&z_2=-y_1+y_2, \qquad
&G_{\!z_{_1}\!z_{_2}}&=p(p-1)(p-2)(y_1-y_2)(2y_2)^{p-3}\,,
\\
4_1\!)\quad &z_1=y_1-y_2,&&
\\
&z_2=-y_1-y_2, \qquad
&G_{\!z_{_1}\!z_{_2}}&=p(p-1)(p-2)(y_1+y_2)(-2y_2)^{p-3}\,,
\\
3_2\!)\quad &z_1=y_1+y_2,&&
\\
&z_2=y_1-y_2, \qquad
&G_{\!z_{_1}\!z_{_2}}&=-p(p-1)(p-2)(y_1-y_2)(2y_1)^{p-3}\,,
\\
4_2\!)\quad &z_1=y_1-y_2,&&
\\
&z_2=y_1+y_2, \qquad
&G_{\!z_{_1}\!z_{_2}}&=-p(p-1)(p-2)(y_1+y_2)(2y_1)^{p-3}\,.
\end{align*}

\bigskip

In the first pair of cases we have $\sign G_{z_1z_2}=\sign H''=\sign(p-2)$ and the opposite sign
in the second pair of cases. In the first pair of cases we have that this sign is the sign of the Hessian determinant $D_2$   up to the $\sign [ wR_2(p-1)R_1]$
(and $D_1=0$ identically); in the second pair of cases we have this sign is the sign of the Hessian determinant $D_1$   up to the $\sign [ wR_2(p-1)R_1]$
(and $D_2=0$ identically).

\bigskip

By the way, we call the attention of the reader to the fact, that, for example, in $4_1)$ above we have necessarily $y_2<0$ (here $y_2$ is fixed and our extremal trajectories in the plane $(y_1,y_3)$ here hit $y_1=-y_2=|y_2|$ as we are always under restrictions $-y_1\le y_2\le y_1$, that is $y_1\ge |y_2|$), so $(-2y_2)^{p-3}$ makes a perfect sense.
The same type of observation holds for all other cases.

\medskip

To complete the investigation of $\sign D_i$ we need to calculate the sign of
the expression in the brackets in~\eqref{Di}:
\begin{equation}
\omega R_2+(p-1)R_1=R_1^2[(p-1)\Phi'-\omega\Phi'']
\end{equation}
\begin{align*}
3_1\!)&\qquad\ &\Phi(\omega)&=G(\omega,-1),
\\
&&\Phi'(\omega)&=G_{\!z_{_1}}(\omega,-1)=p(\omega-1)^{p-2}(\omega+p-2),
\\
&&\Phi''(\omega)&=G_{\!z_{_1}\!z_{_1}}(\omega,-1)=p(p-1)(\omega-1)^{p-3}(\omega+p-3),
\\
&&\hskip-100pt(p-1)\Phi'-\omega\Phi''&=-p(p-1)(p-2)(\omega-1)^{p-3};\rule{0pt}{15pt}
\\
4_1\!)&\qquad\ &\Phi(\omega)&=G(1,-\omega),\rule{0pt}{15pt}
\\
&&\Phi'(\omega)&=-G_{\!z_{_2}}(1,-\omega)=-p(p-1)\omega(1-\omega-1)^{p-2},
\\
&&\Phi''(\omega)&=G_{\!z_{_2}\!z_{_2}}(1,-\omega)=p(p-1)(1-\omega)^{p-3}[1-(p-1)\omega],
\\
&&\hskip-100pt(p-1)\Phi'-\omega\Phi''&=-p(p-1)(p-2)\omega(1-\omega)^{p-3};\rule{0pt}{15pt}
\\
3_2\!)&\qquad\ &\Phi(\omega)&=G(\omega,1),\rule{0pt}{15pt}
\\
&&\Phi'(\omega)&=G_{\!z_{_1}}(\omega,1)=p(\omega+1)^{p-2}(\omega-p+2),
\\
&&\Phi''(\omega)&=G_{\!z_{_1}\!z_{_1}}(\omega,1)=p(p-1)(\omega+1)^{p-3}(\omega-p+3),
\\
&&\hskip-100pt(p-1)\Phi'-\omega\Phi''&=-p(p-1)(p-2)(\omega+1)^{p-3};\rule{0pt}{15pt}
\\
4_2\!)&\qquad\ &\Phi(\omega)&=G(1,\omega),\rule{0pt}{15pt}
\\
&&\Phi'(\omega)&=G_{\!z_{_1}}(1,\omega)=-p(p-1)\omega(\omega+1)^{p-2},
\\
&&\Phi''(\omega)&=G_{\!z_{_1}\!z_{_1}}(\omega,-1)=-p(p-1)(\omega-1)^{p-3}[1+(p-1)\omega],
\\
&&\hskip-100pt(p-1)\Phi'-\omega\Phi''&=-p(p-1)(p-2)\omega(\omega+1)^{p-3}\,.\rule{0pt}{15pt}
\end{align*}
By the way, we call the attention of the reader to the fact, that, for example,
in $4_1)$ above we have necessarily $y_2<0$, so by \eqref{signy2} $\omega< 1$,
so $(1-\omega)^{p-3}$ is fine there. The same type of observation works for
other cases above. We see that in all cases $\sign[(p-1)\Phi'-\omega\Phi'']=
-\sign(p-2)$. Therefore in the first two cases we have $D_2<0$ and this
solution satisfies neither requirement~\eqref{main_max} nor
requirement~\eqref{main_min}. In the second two cases we have $D_1>0$, and the
function $M$ can be a candidate either for $\M_{\max}$ or for $\M_{\min}$
depending on the sign of the second derivative $M_{y_{_3}y_{_3}}$.

Recall that (see~\eqref{M33})
\begin{align*}
M_{y_{_3}y_{_3}}&=\frac{p\omega^{p-2}R_1H^2}{y_3^3}[\omega R_2+(p-1)R_1]\,.
\end{align*}
In the case $3_2\!)$ we have $\sign[\omega R_2+(p-1)R_1]=-\sign (p-2)$, and
therefore we need only to know $\sign R_1=\sign\Phi'=
\sign\frac{d}{d\omega}G(\omega,1)$ Since this solution is considered only in
the sector $\frac{p-2}py_1<y_2<y_1$ (see Fig.~\ref{3_2}), we have
\begin{equation}
\label{G3_2}
G(y_1+y_2,y_1-y_2)=(2y_1)^{p-1}[py_2-(p-2)y_1]>0\,,
\end{equation}
and $\omega$, being the unique positive solution of the equation
\begin{equation}
\label{solut3_2}
G(\omega,1)=(\omega+1)^{p-1}[\omega-p+1]=\frac1{y_3}G(y_1+y_2,y_1-y_2)\,,
\end{equation}
satisfies the condition $\omega>p-1$. Therefore, $\sign R_1=
\sign\frac{d}{d\omega}G(\omega,1) =\sign p (\omega+1)^{p-2}(\omega-p+2)>0$, and
so $\sign M_{y_{_3}y_{_3}}=-\sign(p-2)$, i.e., for $p>2$ this is candidate for
$\M_{\max}$ and for $p<2$ this is candidate for $\M_{\min}$.

\medskip

We are still considering the case $3_2)$. Recall that this function is defined not in the whole domain $\Xi_+$, but only
in the sector $\frac{p-2}py_1<y_2<y_1$. To get a solution everywhere we need to
``glue'' this solution with that we obtained considering the case~2)
(see~\eqref{solut2M}):
\begin{equation}
\label{solut2M1}
M(y)=(y_1+y_2)^p+C(y_3-(y_1-y_2)^p)\,.
\end{equation}
To glue this solution along the plane $y_2=\frac{p-2}py_1$ with that we just
obtained, let us require from the resulting function to be continuous
everywhere. From~\eqref{solut3_2} and~\eqref{G3_2} we see that $G(\omega,1)=0$
on this plane. Therefore, $\omega=p-1$ and $M=\omega^py_3=(p-1)^py_3$. The same
value has solution~\eqref{solut2M1} on this plane for $C=(p-1)^p$.

Now we need to check that we get correct continuation in the sense that if the
solution satisfies~\eqref{main_max}, then its continuation satisfies the same
condition as well, if the solution satisfies~\eqref{main_min}, then the same is
true for its continuation. The Hessian determinants will have the right sign
automatically (actually $D_2=0$ identically). We need only to check the sign of
\begin{equation*}
M_{y_{_1}y_{_1}}=M_{y_{_2}y_{_2}}=
p(p-1)\big((y_1+y_2)^{p-2}-(p-1)^p(y_1-y_2)^{p-2}\big)
\end{equation*}
in the domain $-y_1<y_2<\frac{p-2}py_1$, or in the initial coordinates
$0<x_2<(p-1)x_1$.

For $p>2$ we have
\begin{align*}
(y_1+y_2)^{p-2}=x_2^{p-2}&<(p-1)^{p-2}x_1^{p-2}
\\
&<(p-1)^px_1^{p-2}=(p-1)^p(y_1-y_2)^{p-2}\,,
\end{align*}
and for $p<2$ we have
\begin{align*}
(y_1+y_2)^{p-2}=x_2^{p-2}&>(p-1)^{p-2}x_1^{p-2}
\\
&>(p-1)^px_1^{p-2}=(p-1)^p(y_1-y_2)^{p-2}\,.
\end{align*}
This means that $M$ is a candidate for $\M_{\max}$ if $p>2$ and a candidate for
$\M_{\min}$ if $p<2$, as it has to be.
\begin{equation*}
\end{equation*}

Let us rewrite expression~\eqref{solut2M1} in the same form, as it was made
in~\eqref{solut3_2}.
\begin{equation}
M-Cy_3=(y_1+y_2)^p-C(y_1-y_2)^p=x_2^p-Cx_1^p\,.
\label{symmeq3}
\end{equation}
Therefore, if we change a bit the definition of $G$ defining it on the quadrant
$z_i\ge0$ as follows
\begin{equation}
\label{Gp}
G_p(z_1,z_2)=
\begin{cases}
z_1^p-(p-1)^pz_2^p,&\quad\text{if } z_1\le(p-1)z_2\,,
\\
(z_1+z_2)^{p-1}\big[z_1-(p-1)z_2\big],&\quad\text{if } z_1\ge(p-1)z_2\,,
\end{cases}
\end{equation}
then we can write two our solutions $M$ on $\Xi_+$ in an implicit form as
before:
\begin{equation*}
G(y_1+y_2,y_1-y_2)=y_3G(\omega,1)\,.
\end{equation*}
or solutions $B$ on $\Omega_+$
\begin{equation}
\label{Ge32}
G(x_2,x_1)=G(B_{\phantom3}^{\frac1p},x_3^{\frac1p})\,,
\end{equation}

In the case $4_2)$ we again consider exactly the same $G_p$ from~\eqref{Gp}. In
a similar way we can glue continuously the solution in case $4_2\!)$ found in
the sector $-y_1<y_2<\frac{2-p}py_1$
\begin{equation}
\label{solut4_2}
G(1,\omega)=(\omega+1)^{p-1}[1-(p-1)\omega]=\frac1{y_3}G(y_1-y_2,y_1+y_2)\,,
\end{equation}
which is the same as
\begin{equation}
\label{Ge42}
G(x_1,x_2)=G(x_3^{\frac1p}, B_{\phantom3}^{\frac1p})\,,
\end{equation}
with the solution~\eqref{solut2M1} along the line $y_2=\frac{2-p}py_1$. Here we
have to take $C=(p'-1)^p$, because on the line $y_2=\frac{2-p}py_1$ we have
$G(1,\omega)=0$, i.e., $\omega=p'-1$. Now, in the sector
$-y_1<y_2<\frac{2-p}py_1$ we have
\begin{equation*}
M_{y_{_3}y_{_3}}=
\frac{p\omega^{p-2}R_1^2H^2}{y_3^3}\cdot\frac{p-2}{\omega+1}\,.
\end{equation*}
Therefore, $\sign M_{y_{_3}y_{_3}}=\sign(p-2)$, i.e., for $p<2$ this is
candidate for $\M_{\max}$ and for $p>2$ this is candidate for $\M_{\min}$.

In the ``dual'' sector $x_2>(p'-1)x_1$ (or $y_2>\frac{2-p}py_1$) for $p>2$ we
have
\begin{align*}
(y_1+y_2)^{p-2}=x_2^{p-2}&>(p'-1)^{p-2}x_1^{p-2}
\\
&>(p'-1)^px_1^{p-2}=(p'-1)^p(y_1-y_2)^{p-2}\,,
\end{align*}
and for $p<2$ we have
\begin{align*}
(y_1+y_2)^{p-2}=x_2^{p-2}&<(p'-1)^{p-2}x_1^{p-2}
\\
&<(p'-1)^px_1^{p-2}=(p'-1)^p(y_1-y_2)^{p-2}\,.
\end{align*}
This means that $M$ is a candidate for $\M_{\max}$ if $p<2$ and a candidate for
$\M_{\min}$ if $p>2$.

Using the same ``generalized'' definition~\eqref{Gp} of the function $G$ we can
write  our solutions $M$ on $\Xi_+$ in an implicit form as before:
\begin{equation*}
G(y_1-y_2,y_1+y_2)=y_3G(1,\omega)\,.
\end{equation*}
or solutions $B$ on $\Omega_+$
\begin{equation}
\label{Ge42max}
G(x_1,x_2)=G(x_3^{\frac1p},B_{\phantom3}^{\frac1p})\,,
\end{equation}
which should give, as we said above, the candidate for $\Bel_{\max}$ for $p<2$
and $\Bel_{\min}$ for $p>2$. Notice that for $p>2$ the candidate for, say,
$\Bel_{\max}$ is given by equation~\eqref{Ge32}.

\medskip

It is a bit inconvenient to use one equation for, say, $B_{\max}$ if $p>2$
(this will be~\eqref{Ge32}), and another one (this will be~\eqref{Ge42max})
for the same $\Bel_{\max}$ if $p<2$. We note that after interchanging role of
$z_i$ and replacing $p$ by $p'$ we get the scalar multiple of the original
expression in both lines of~\eqref{Gp}. This allows us to give one expression
for $B_{\max}$ for all $p$ using notation of $p^*=\max\{p,p'\}$. In such a way
we come to formula~\eqref{Fp} for $F_p$, where we introduce additional scalar
coefficients to make this function not only continuous but $C^1$-smooth
everywhere in $\Omega_+$. This smoothness guarantee us that the solution $B$ is
$C^1$-smooth as well.

\section{Proof of Theorem~\ref{t1}. Verification theorem.}

Exactly in the spirit of Stochastic Optimal Control theory we wrote the
PDE~\eqref{MA}, we solved it in the previous section by building $B$ which
solves the equations of Theorem~\ref{t1} (these are the same equations
as~\eqref{Ge32}, \eqref{Ge42}). Now continuing in the spirit of general results
of Stochastic Optimal Control theory~\cite{FR}, \cite{WF} we need to prove that
these solutions in fact are equal to $\Bel_{\max}$, $\Bel_{\min}$. In
Stochastic Optimal Control theory such proofs are called {\it verification
theorems}, and they state roughly that if the solutions have a certain
smoothness (often even slightly less than $C^2$), and if the domain is convex,
then we are fine.

From now on we denote by $B_{\max}$ the unique positive solution of the
equation $F(|x_2|,|x_1|)=F(B_{\phantom3}^{\frac1p},x_3^{\frac1p})$ and by
$B_{\min}$ the unique positive solution of the equation
$F(|x_1|,|x_2|)=F(x_3^{\frac1p},B_{\phantom3}^{\frac1p})$, where the function
$F=F_p$ is defined in~\eqref{Fp}. Existence and uniqueness of the solution
follows from the fact that $F(z_1,z_2)$ is strictly increasing in $z_1$ from
$-p^{*(p-1)}(p^*-1)^pz_2^p$ till $+\infty$ as $z_1$ runs from $0$ to $+\infty$
and it is strictly decreasing in $z_2$ from $p(p^*-1)^{p-1}z_2^p$ till
$-\infty$ as $z_2$ runs from $0$ to $+\infty$. Indeed, the first partial
derivatives of $F$ are
\begin{equation}
\label{Fpz1}
F_{z_{_1}}\!\!=\!
\begin{cases}
pz_1^{p-1},&\hskip-120pt\text{if } z_1\le(p^*-1)z_2\,,
\\
p(1-\frac1{p^*})^{p-1}(z_1+z_2)^{p-2}\big[pz_1-\big((p-1)(p^*-1)-1\big)z_2\big],
\rule{0pt}{25pt}
%\hskip-150pt
\\
&\hskip-120pt\text{if } z_1\ge(p^*-1)z_2\,;
\end{cases}
\end{equation}
\begin{equation}
\label{Fpz2}
F_{z_{_2}}\!\!=\!
\begin{cases}
-(p^*-1)^ppz_2^{p-1},&\hskip-100pt\text{if } z_1\le(p^*-1)z_2\,,
\\
-p(1-\frac1{p^*})^{p-1}(z_1+z_2)^{p-2}\big[(p^*-p)z_1+p(p^*-1)z_2\big],
\rule{0pt}{25pt}
%\hskip-150pt
\\
&\hskip-100pt\text{if } z_1\ge(p^*-1)z_2\,.
\end{cases}
\end{equation}
Note that both derivatives are continuous everywhere (even at the origin, where
they vanish). Moreover, $F_{z_1}>0$ if $z_1>0$ and $F_{z_2}<0$ if $z_2>0$, i.e.,
$F$ is strictly increasing in $z_1$ and strictly decreasing in $z_2$.

In the case of $B_{\max}$ we look for a solution of the equation
\begin{equation*}
F(B^{\frac1p}_{\phantom3}\!,x_3^{\frac1p})=F(|x_2|,|x_1|)
\end{equation*}
or
\begin{equation*}
F(\omega,1)=\frac1{x_3}F(|x_2|,|x_1|)\,.
\end{equation*}
Thus, we get a continuous solution $\omega(x)$ everywhere except the plane
$x_3=0$, where $\omega$ is not defined. But we can easily estimate the behavior
of $\omega$ nearly the line $x_3=x_1=0$. Since $F$ is decreasing in $z_2$ and
$0\le|x_1|\le x_3^{\frac1p}$, we have
\begin{equation*}
F\big(\frac{|x_2|}{\phantom{a}x_3^{1/p}},1\big)\le
F(\omega,1)=F\big(\frac{|x_2|}{\phantom{a}x_3^{1/p}},\frac{|x_1|}{\phantom{a}x_3^{1/p}}\big)\le
F\big(\frac{|x_2|}{\phantom{a}x_3^{1/p}},0\big)\,.
\end{equation*}
Since $F$ is increasing in $z_1$, we get
\begin{equation*}
\frac{|x_2|}{\phantom{a}x_3^{1/p}}\le \omega\le\omega_0\,,
\end{equation*}
where $\omega_0$ is the solution of the equation
\begin{equation*}
(\omega_0+1)^{p-1}(\omega_0-p^*+1)=\frac{|x_2|^p}{x_3}\,.
\end{equation*}
Whence $\omega_0\ge p^*-1$ and
\begin{equation*}
(\omega_0-p^*+1)^p\le(\omega_0+1)^{p-1}(\omega_0-p^*+1)=\frac{|x_2|^p}{x_3}\,,
\end{equation*}
i.e.,
\begin{equation*}
\omega_0\le p^*-1+\frac{|x_2|}{\phantom{a}x_3^{1/p}}\,,
\end{equation*}
Therefore, for $B=\omega^px_3$ we have the following estimate
\begin{equation*}
|x_2|^p\le B\le\big(|x_2|+(p^*-1)x_3^{1/p}\big)^p\,,
\end{equation*}
which gives the continuity near $x_3=0$. Thus, the solution $B_{\max}$ is
continuous in the closed domain $\Omega$.

Similar considerations gives us the continuity of $B_{\min}$. In that case we
have the equation
\begin{equation*}
F(1,\omega)=\frac1{x_3}F(|x_1|,|x_2|)\,,
\end{equation*}
and hence
\begin{equation*}
F\big(0,\frac{|x_2|}{\phantom{a}x_3^{1/p}}\big)\le
F(1,\omega)=F\big(\frac{|x_1|}{\phantom{a}x_3^{1/p}},\frac{|x_2|}{\phantom{a}x_3^{1/p}}\big)\le
F\big(1,\frac{|x_2|}{\phantom{a}x_3^{1/p}}\big)\,.
\end{equation*}
Now $F(1,\omega)$ is decreasing in $\omega$, therefore,
\begin{equation*}
\frac{|x_2|}{\phantom{a}x_3^{1/p}}\le \omega\le\omega_0\,,
\end{equation*}
where $\omega_0$ is the solution of the equation
\begin{equation*}
1-(p^*-1)^p\omega_0^p=-(p^*-1)^p\frac{|x_2|^p}{x_3}\,,
\end{equation*}
i.e., $\omega_0^p=(p^*-1)^{-p}+|x_2|^p/x_3$ and for $B=\omega^px_3$ we have the
following estimate
\begin{equation*}
|x_2|^p\le B\le |x_2|^p+(p^*-1)^{-p}x_3\,,
\end{equation*}
which gives the continuity near $x_3=0$. Thus, the solution $B_{\min}$ is
continuous in the closed domain $\Omega$ as well.

First step of the proof is to check that the the main inequality
(concavity~\eqref{main-max} for the candidate $B_{\max}$ and
convexity~\eqref{main-min} for the candidate $B_{\min}$) is fulfilled if the
points $x^+, x^-$ satisfy the extra condition on their coordinates:
\begin{equation}
\label{zigzag}
|x_1^+-x_1^-|=|x_2^+-x_2^-|\,.
\end{equation}
This was almost done in the preceding section, when constructing these
candidates. We know that the Hessians of our candidates have the required signs
everywhere in our convex domain $\Omega$ except, possibly, the planes $x_1=0$,
$x_2=0$, and, either $|x_2|=(p^*-1)|x_1|$ for $B_{\max}$ or $|x_1|=
(p^*-1)|x_2|$ for $B_{\min}$. On these hyperplanes our solutions are not
$C^2$-smooth, but this does not prevent them from being correctly convex (for
the $3_2)$, $p>2$ and $4_2)$, $p<2$ cases) and correctly concave for the rest
of the cases (namely, for the $3_2)$, $p<2$ and $4_2)$, $p>2$ cases). This one
checks just by calculating directly the sign of the jump of the derivative.
Namely, one fixes the line $L_t= a+bt$ in the direction of the vector $b=(b_1,
b_2, b_3)$ such that $|b_1|=|b_2|$. We need to prove the concavity of $B$, the
candidate for $\Bel_{\max}$, and the convexity of $B$, the candidate for
$\Bel_{\min}$ on $L_t$. At any point of $L_t$, which is {\it not} the
intersection of $L_t$ with the aforementioned hyperplanes, this concavity
(convexity)  follows from the previous section, this is how the candidates for
$\Bel_{\max}$,  $\Bel_{\min}$ were built in~\eqref{Ge32}, \eqref{Ge42}. At the
points of intersections of $L_t$ with the hyperplanes one can check the sign of
the jump of the derivative of $B(a+tb)$. We leave this as an exercise for the
reader.

Let the triple of points $x,\, x^+\!,\, x^-$ satisfies the following relations
\begin{equation}
\label{splitting}
\begin{aligned}
|x_1^+-x_1^-|=|x_2^+-x_2^-|\,,&\quad x_3^+\ge |x_1^+|^p\,,\quad
x_3^-\ge|x_1^-|^p\,,
\\
x=\alpha^-x^-\!+\alpha^+x^+\,\quad &\alpha^-\!+\alpha^+=1\,, \quad\alpha^\pm>0.
\end{aligned}
\end{equation}
Now we have our solution $B_{\max}$ the following main inequality (biconcavity)
\begin{equation}
\label{MINconc}
B(x)-\alpha^-B(x^-)-\alpha^+B(x^+)\ge 0\,,
\end{equation}
and the opposite main inequality (biconvexity)
\begin{equation}
\label{MINconv}
B(x)-\alpha^-B(x^-)-\alpha^+B(x^+)\le 0
\end{equation}
is true for the solution $B_{\min}$.

\begin{lemma}
\label{est} If a continuous in $\Omega$ function $B$ satisfies the main
inequality~\eqref{MINconc} and the boundary restriction
$B(x_1,x_2,|\,x_1|^p)\ge|\,x_2|^p$, then $B\ge\Bel_{\max}$. If it
satisfies~\eqref{MINconv} and $B(x_1,x_2,|\,x_1|^p)\le|\,x_2|^p$, then
$B\le\Bel_{\min}$.
\end{lemma}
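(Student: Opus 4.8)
The plan is to establish the inequality $B \ge \Bel_{\max}$ by taking an arbitrary admissible pair of test functions $(f,g)$ on the interval $J$ and running a supermartingale-type argument: the key point is that the main inequality \eqref{MINconc} says precisely that the function $x \mapsto B$ evaluated along the Bellman point process associated with any admissible pair is a supermartingale, so its value at the root dominates its average over the finest level of the dyadic filtration, and on that finest level the boundary restriction takes over.

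More precisely, fix an admissible pair $f, g \in L^p(J)$ with $\av fJ = x_1$, $\av gJ = x_2$, $\av{|f|^p}J = x_3$, and $|(f,h\cii I)| = |(g,h\cii I)|$ for all dyadic $I \subset J$. For each $I \in \D$ set $x(I) = (\av fI, \av gI, \av{|f|^p}I) \in \Omega$; note $x(J) = x$. The condition $|(f,h\cii I)| = |(g,h\cii I)|$ is exactly the condition $|x_1(I^+) - x_1(I^-)| = |x_2(I^+) - x_2(I^-)|$, and since $x(I) = \alpha^+_I x(I^+) + \alpha^-_I x(I^-)$ with $\alpha^\pm_I = |I^\pm|/|I|$, the triple $x(I), x(I^+), x(I^-)$ satisfies all the relations in \eqref{splitting}. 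Therefore \eqref{MINconc} gives, for every $I$,
\begin{equation*}
B(x(I)) \ge \alpha^+_I B(x(I^+)) + \alpha^-_I B(x(I^-))\,.
\end{equation*}
Iterating this down to the $n$-th dyadic generation $\D_n(J)$ and weighting by $|I|/|J|$ yields
\begin{equation*}
B(x) \ge \frac1{|J|} \sum_{I \in \D_n(J)} |I|\, B(x(I))\,,
\end{equation*}
for every $n$. Now I let $n \to \infty$. By the martingale convergence theorem $\av fI \to f(t)$ and $\av{|f|^p}I \to |f(t)|^p$ for a.e.\ $t$ (as $I$ shrinks to $t$), and similarly $\av gI \to g(t)$; so $x(I) \to (f(t), g(t), |f(t)|^p)$, a point on the boundary piece $x_3 = |x_1|^p$. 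Continuity of $B$ on the closed domain $\Omega$ (which was established just before the Lemma) together with the boundary restriction $B(x_1,x_2,|x_1|^p) \ge |x_2|^p$ gives $\liminf_n B(x(I_n(t))) \ge |g(t)|^p$ a.e.\ Feeding this into the displayed inequality via Fatou's lemma produces
\begin{equation*}
B(x) \ge \frac1{|J|}\int_J |g(t)|^p\, dt = \av{|g|^p}J\,.
\end{equation*}
Taking the supremum over all admissible pairs $(f,g)$ gives $B(x) \ge \Bel_{\max}(x)$. The argument for $\Bel_{\min}$ is identical with all inequalities reversed, using \eqref{MINconv} and the opposite boundary restriction, and replacing $\liminf$ by $\limsup$ with the reverse Fatou lemma (legitimate because $|g|^p$ is integrable, so the sequence $B(x(I_n(\cdot)))$ is dominated after the crude a priori bounds on $B$ derived above).

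**The main obstacle** is the passage to the limit: one must be careful that the quantities $B(x(I))$ on the finest generation are controlled well enough to justify exchanging limit and integral. This is where the explicit two-sided bounds on $B$ proved just before the Lemma (for instance $|x_2|^p \le B_{\max}(x) \le (|x_2| + (p^*-1)x_3^{1/p})^p$ and the analogous bound for $B_{\min}$) do the work: they show $B(x(I))$ is bounded above by an integrable function of $t$ in the $B_{\min}$ case and bounded below by $0$ in the $B_{\max}$ case, so Fatou (resp.\ reverse Fatou) applies. A secondary point to be careful about is that we only need the a.e.\ convergence $x(I_n(t)) \to (f(t),g(t),|f(t)|^p)$, which is the Lebesgue differentiation theorem applied to $f$, $g$, and $|f|^p$; no stronger mode of convergence is required because continuity of $B$ on the compact pieces of $\Omega$ converts it directly into a.e.\ convergence of $B(x(I_n(t)))$ to the boundary value, which by the boundary restriction is $\ge |g(t)|^p$ (resp.\ $\le |g(t)|^p$).
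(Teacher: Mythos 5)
Your overall scheme is the same as the paper's: attach the Bellman point $x(I)$ to each dyadic $I$, observe that admissibility puts the triple $x(I),x(I^+),x(I^-)$ into~\eqref{splitting}, telescope the main inequality down to generation $N$, and pass to the limit using Lebesgue differentiation, continuity of $B$, and the boundary restriction. The gap is in how you justify the limit passage. First, the lemma is stated for an \emph{arbitrary} continuous $B$ satisfying~\eqref{MINconc} (resp.~\eqref{MINconv}) and the boundary restriction --- indeed it is later applied to the function $\Psi(\tilde x)=\psi(x)+\beta_p x_3$ built from an arbitrary zigzag concave majorant $\psi$ --- so you are not entitled to the two-sided estimates $|x_2|^p\le B_{\max}\le(|x_2|+(p^*-1)x_3^{1/p})^p$ and $|x_2|^p\le B_{\min}\le |x_2|^p+(p^*-1)^{-p}x_3$: those were derived for the specific implicit solutions of the $F_p$-equations, not from the hypotheses of the lemma. (In the $\Bel_{\max}$ case the needed lower bound $B\ge 0$ can in fact be recovered from the hypotheses, e.g.\ by applying~\eqref{MINconc} along a chord in a direction $(1,\pm1,0)$ whose two endpoints lie on the parabola $x_3=|x_1|^p$, but you must supply such an argument rather than quote properties of the candidate.)

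Second, and more seriously, even granting an upper bound of the form $B(x)\le |x_2|^p+c\,x_3$ in the $\Bel_{\min}$ case, your ``reverse Fatou'' step does not go through as stated: evaluated along the Bellman points it gives $B(x(I_n(t)))\le \mathbb E\bigl[\,|g|^p+c|f|^p\mid \mathcal F_n\bigr](t)$, which is a \emph{sequence} of majorants; a single integrable dominating function would be $\sup_n \mathbb E[\,|g|^p+c|f|^p\mid\mathcal F_n]$, i.e.\ the dyadic maximal function of an $L^1$ function, which need not be integrable. So the hypothesis of the reverse Fatou lemma fails in general for $f,g\in L^p$. To close this you need either a generalized dominated convergence argument (the majorants converge a.e.\ and in $L^1$ to $|g|^p+c|f|^p$, so Pratt's lemma or uniform integrability/Vitali applies), or the paper's route: prove the inequality first for \emph{bounded} $f,g$ --- then all Bellman points stay in a fixed compact subset of $\Omega$, where the continuous $B$ is bounded and the ordinary dominated convergence theorem suffices --- and afterwards extend to arbitrary $f,g\in L^p$ by truncation and monotone convergence. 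As written, your limit step for $\Bel_{\min}$ (and the appeal to candidate-specific bounds in both cases) is a genuine gap, though one that can be repaired along either of these lines.
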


\begin{proof}
Let $I=[0,1]$ and $J$ denote an arbitrary its dyadic subinterval. As always
$J_+, J_-$ are two sons of $J$. Let us fix two bounded measurable test
functions $f,g$ on $I$ such that $|(g, h\ci{J})|=|(f,h\ci{J})|$ for any $J$.
Put
$$
x\!\ci{J}=(\av fJ,\av gJ,\av{\,|f|^p}J)\,.
$$
The fact that  $|(g, h\ci{J})|=|(f,h\ci{J})|$ exactly guarantees that
$x^+\!,\,x^-$ satisfy the assumptions of~\eqref{splitting} and we can rewrite
inequalities~\eqref{MINconc} and~\eqref{MINconv} with $x=x\!\ci{J}$ in the form
\begin{align}
\label{MINconcJ}
|J|\,B(x)-\alpha^-|J_-|\,B(\,x^-)-|J_+|\,\alpha^+B(\,x^+)\ge 0\,,
\\
\label{MINconvJ}
|J|\,B(x)-\alpha^-|J_-|\,B(\,x^-)-|J_+|\,\alpha^+B(\,x^+)\le 0\,.
\end{align}

Let $\J_n$ denotes the set of dyadic subintervals of n-th generation, i.e.,
$\J_0=\{I\}$, and $\J_n$ is the set of suns of elements from $\J_{n-1}$. So,
adding up all our inequalities~\eqref{MINconc} with $x=x\ci{J}$ for
$J\in\J_{n-1}$ we get
$$
\sum_{J\in\J_{n-1}}\!|J|\,B(\,x\!\ci{J})\,\ge\sum_{J\in\J_n}|J|\,B(\,x\!\ci{J})\,.
$$
Adding up these inequality over $n$ from 1 to $N$ we get
$$
B(x)\ge\sum_{J\in\J_N}|J|\,B(\,x\!\ci{J})=\int_0^1\!\!B(x^N\!(t))\,dt\,,
$$
where $x^N\!(t)$ is the step function equal to $x\!\ci{J}$ if $t\in J$ for every $J$ from $\J_N$.

Notice that $\av\vf J\!\to \vf(t)$ almost everywhere when runs over a family of
nested intervals shrinking to the point $t$ for an arbitrary summable function
$\vf$. Therefore, almost everywhere
$$
x^N\!(t)=(\av fJ,\av gJ,\av{\,|f|^p}J) \to (f(t),g(t),|f(t)|^p)\qquad\text{as }
N\to\infty
$$
and since $B$ is continuous, we have
$$
B(x^N\!(t))\to B(f(t),g(t),|f(t)|^p)\ge|g(t)|^p\,.
$$
Now using Lebesgue dominant convergence theorem we come to the estimate
$$
\av{\,|g|^p}I \le B(x)
$$
for every pair of bounded measurable functions $f,g$. And finally approximating
arbitrary $f,g\in L^p(I)$ by its cut-off functions and using monotone
convergence theorem we can extend this inequality to the set of arbitrary
possible test functions $f$ and $g$ what means exactly that $\Bel_{\max}(x)\le
B(x)$.

For the case of $\Bel_{\min}$ in all these considerations we need to change the
sign of inequalities only, and we will get $\Bel_{\min}(x)\ge B(x)$ for $B$
satisfying~\eqref{MINconv}.

\end{proof}

We are left to prove the opposite inequalities
$$
\Bel_{\max}\ge B_{\max}(x)\quad\text{and}\quad\Bel_{\min}\le B_{\min}(x).
$$
This can be done by reversing the reasoning in the lemma above. Using the fact
that domain $\Omega=\{x=(x_1,x_2, x_3): x_3\ge |x_1|^p\}$ is foliated by the
straight line segments (extremal trajectories) it is possible to construct the
sequence of test functions $f_n,g_n$ corresponding any given point $x\in\Omega$
and such that $\av{|g_n|^p}I\to B(x)$. This just supply us with the required
inequality. The reader can see how this type of reasoning is done in
\cite{VaVo2}. The main idea is to travel along the extremal trajectories
starting  from $x\in \Omega$ to build a net $\mathcal{N}:=\{x^+, x^-, x^{++},
x^{+-}, x^{--}, x^{-+},\ldots\}$. All points of the net should belong to
$\Omega$, and we put them on the same extremal trajectory on which $x$ lies for
a while. If one of them, say, $z$ hits the boundary: $\partial \Omega$
(parabola) we stop building children $z^+, z^-$. But then one of them, say,
$\zeta$ can hit the special hyperplanes $x_1=0$ or $x_2=0$. In this case we
choose $\zeta^+, \zeta^-$ in such a way that they lie in different quadrants
very close to $\zeta$. Then we start anew a building of the net for $\zeta^+$
and $\zeta^-$ separately. The closer $\zeta^+, \zeta^-$ are to $\zeta$ the
smaller will be difference $\av{|g_n|^p}I-B(x)$. In such a way for arbitrary
$\ve$ we obtain the inequalities
$$
\Bel_{\max}(x)\ge B_{\max}(x)-\ve\quad\text{and}\quad\Bel_{\min}(x)\le
B_{\min}(x)+\ve\,.
$$

The reader can address to~\cite{VaVo2} to understand how the net $\mathcal{N}$
generates a required pair of functions $f,g$. But in the proof of the following
lemma we only state the result of the described construction that supplies us
with a recursive definition of $f$ and $g$.

\begin{lemma}
\label{oppos-est} The functions $B_{\max}$ and $B_{\min}$ satisfies the
inequalities
$$
\Bel_{\max}(x)\ge B_{\max}(x)\quad\text{and}\quad\Bel_{\min}(x)\le
B_{\min}(x)\,.
$$
\end{lemma}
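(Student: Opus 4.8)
The plan is to realize the candidate value $B(x)$ (either $B_{\max}$ or $B_{\min}$) as a limit of averages $\av{|g_n|^p}{I}$ over genuine admissible pairs $(f_n,g_n)$ on $I=[0,1]$, by using the foliation of $\Omega$ by extremal trajectories. Concretely, I would fix $x\in\Omega_+$ (the general case follows by the symmetry $\Bel(x_1,x_2,x_3)=\Bel(|x_1|,|x_2|,x_3)$) and build a dyadic net $\mathcal N=\{x,x^+,x^-,x^{++},\dots\}$ of points in $\Omega$ together with splitting ratios $\alpha^\pm$ at each node, so that each child pair sits on the same extremal trajectory through the parent and the parent is their $\alpha^\pm$-barycenter. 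The crucial feature is that along an extremal trajectory the candidate $B$ is \emph{affine}, so each splitting step turns the main inequality \eqref{MINconc} (resp.~\eqref{MINconv}) into an \emph{equality}: $B(x\ci J)=\alpha^-B(x^-\ci{J})+\alpha^+B(x^+\ci{J})$. Telescoping over $N$ generations exactly as in Lemma~\ref{est}, but now with equalities, gives $B(x)=\int_0^1 B(x^N(t))\,dt$ for the step function $x^N$ built from the net; and if the net is engineered so that $x^N(t)$ converges (a.e., and boundedly) to a point of the boundary parabola $x_3=|x_1|^p$, where $B(x_1,x_2,|x_1|^p)=|x_2|^p$ by the boundary condition, then $\int_0^1 B(x^N(t))\,dt\to \av{|g|^p}{I}$ for the limiting pair $(f,g)$, whence $\Bel_{\max}(x)\ge B_{\max}(x)$ (resp.~$\Bel_{\min}(x)\le B_{\min}(x)$).

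The steps, in order, are: (i) describe the recursive construction of $(f_n,g_n)$ from the net exactly as referenced in~\cite{VaVo2}: at a node with point $x\ci J$ on interval $J$, the condition $|x_1^+-x_1^-|=|x_2^+-x_2^-|$ forces $|(f,h\ci J)|=|(g,h\ci J)|$, so the pieced-together functions remain an admissible pair at every finite stage; (ii) verify that following extremal trajectories keeps all net points inside $\Omega$ — trajectories of types $3_2)$ and $4_2)$ terminate on the boundary parabola or run to $y_3=+\infty$, so one can always continue splitting until a point lands on (or near) $\partial\Omega$; (iii) handle the two sources of non-smoothness: when a node $z$ hits the parabola $x_3=|x_1|^p$ stop splitting it (it contributes the correct boundary value $|z_2|^p$), and when a node $\zeta$ hits one of the gluing hyperplanes $x_1=0$ or $x_2=0$, restart the construction separately for two children $\zeta^\pm$ placed in opposite quadrants at distance $\lesssim\ve$ from $\zeta$, so the affinity of $B$ fails only by $O(\ve)$ across that step; (iv) sum up the (near-)equalities to get $\Bel_{\max}(x)\ge B_{\max}(x)-\ve$ and $\Bel_{\min}(x)\le B_{\min}(x)+\ve$, then let $\ve\to0$; (v) invoke boundedness of $B$ on compacta (continuity on $\Omega$, already established) together with the boundary estimates $|x_2|^p\le B_{\max}\le(|x_2|+(p^*-1)x_3^{1/p})^p$ and $|x_2|^p\le B_{\min}\le |x_2|^p+(p^*-1)^{-p}x_3$ to justify the passage to the limit by dominated convergence, and approximate unbounded $f,g\in L^p$ by cut-offs via monotone convergence, mirroring Lemma~\ref{est}.

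The main obstacle is step (iii): controlling the error introduced when an extremal trajectory of the candidate meets the gluing hyperplanes $x_i=0$, since there the two solution branches (the Monge--Amp\`ere branch from case $3_2)$ or $4_2)$ and the ``vertical'' branch \eqref{solut2M}) are only $C^1$-glued, not $C^2$, so $B$ is not affine across such a crossing and the telescoping produces a genuine defect rather than an equality. The resolution is quantitative: because $B$ is $C^1$ across these hyperplanes, the defect at a crossing node is $o(1)$ as the two children are pushed toward $\zeta$, and since along each branch the trajectory segments again carry $B$ affinely, the \emph{total} accumulated defect over all generations can be made $<\ve$ by choosing the approach distances summably small; the bookkeeping that the number of crossings stays controlled and the defects are summable is the only delicate point, and it is exactly the computation carried out in~\cite{VaVo2}, to which we refer for the details of how $\mathcal N$ generates the pair $(f,g)$.
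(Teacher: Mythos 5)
Your overall strategy is the one the paper itself sketches just before the lemma (follow the foliation by extremal trajectories, exploit that the candidate is affine along them so that the splittings are lossless, stop on the parabola where $B=|x_2|^p$), so in spirit you are on the same track. But there is a genuine gap exactly at the point you flag in step (iii), and it is not a bookkeeping issue that can be waved away by $C^1$-smoothness. First, a correction to step (ii): the non-vertical extremal trajectories of the cases $3_2)$, $4_2)$ run from the parabola to the hyperplanes $x_1=0$ or $x_2=0$ and \emph{terminate} there (only in the complementary sector are they vertical); these hyperplanes are not occasional crossings but the place where every nontrivial trajectory of the relevant branch ends. At such a point $\zeta$ the candidate is not affine along any admissible zigzag direction (the trajectories from the two quadrants meet there at an angle), so every split at $\zeta$ carries a genuine concavity defect. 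Your proposed fix --- put $\zeta^\pm$ in opposite quadrants at distance $\lesssim\ve$ and restart --- makes the defect per visit small, but it makes the \emph{progress} per visit (the portion of mass that can then be delivered to the parabola before the construction returns to the hyperplane) small as well; the number of returns grows like $1/\ve$, and the total defect vanishes only if one shows the per-visit defect is of strictly higher order than the per-visit progress. That quantitative balance is the entire content of the lemma, and asserting that it is ``exactly the computation carried out in~\cite{VaVo2}'' leaves the decisive step unproved.

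The paper closes precisely this gap by an explicit self-similar construction at the points $(0,x_2,x_3)$ (for $\Bel_{\max}$, $p>2$; at $x_2=0$ for $p<2$): $f$ and $g$ are defined recursively on $[0,1]$, equal to constants $\pm c$, $d_\pm$ on the two end intervals of length $\ve$ and to a rescaled copy (factor $\gamma$) of themselves in the middle. The admissibility conditions give $d_\pm$, $\gamma\approx 1+2c_0\ve/x_2$ and the limit equation $c_0^p=(1-\tfrac{pc_0}{x_2})x_3$, whence
$\lim_{\ve\to0}\av{|g|^p}{[0,1]}=x_3(x_2-c_0)^p/c_0^p$, and this limit is identified as the root of $F_p(x_2,0)=F_p(\omega,1)x_3$, i.e.\ exactly $B_{\max}(0,x_2,x_3)$; interior points are then handled by one lossless split along their trajectory (one endpoint on the parabola, the other on the hyperplane, already treated). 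To make your proposal a proof you would need either this explicit computation or an equivalent quantitative estimate showing the accumulated defect at the hyperplane visits tends to zero; as written, the argument assumes what is to be proved.
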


\begin{proof}
We construct an extremal sequence of pairs $f,g$ for the function
$\Bel_{\max}(x)$ for $p>2$ and for some point $x$ on the plane $x_1=0$. For
$p<2$ the same construction works for $x_2=0$. For all other points we ``glue''
the extremal pairs form the known functions on the ends of the extremal
trajectories. The detailed explanation how to do this can be found e.g.
in~\cite{SV}.

Take an arbitrarily small $\ve$ and recursively define the following pair of
test functions
$$
f(t)=
\begin{cases}
\quad-c\,,&\qquad0<t<\ve\,,
\\
\gamma f(\frac{t-\ve}{1-2\ve})\,,&\qquad\ve<t<1-\ve\,,
\\
\quad\phantom{-}c\,,&\ 1-\ve<t<1
\end{cases}
$$
and
$$
g(t)=
\begin{cases}
\quad d_-\,,&\qquad0<t<\ve\,,
\\
\gamma g(\frac{t-\ve}{1-2\ve})\,,&\qquad\ve<t<1-\ve\,,
\\
\quad d_+\,,&\ 1-\ve<t<1\,.
\end{cases}
$$
where the constants $c$, $d_\pm$, and $\gamma$ will be defined from the
conditions that guarantee that this is an admissible pair of test functions
corresponding to a given point $x=(0,x_2,x_3)$. It is not difficult to see that
these formulas correctly define $f$ and $g$ almost everywhere on $[0,1]$.

It is evident that $x_1=\av f{[0,1]}=0$. Since
$$
\av g{[0,1]}=\ve(d_-+d_+)+(1-2\ve)\gamma\av g{[0,1]}\,,
$$
the condition $\av g{[0,1]}=x_2$ is
\begin{equation}
\label{x_2}
\ve(d_-+d_+)=(1-\gamma+2\ve\gamma)x_2\,.
\end{equation}
The condition $\av{|f|^p}{[0,1]}=x_3$ gives the relation
\begin{equation}
\label{x_3}
2\ve c^p=(1-\gamma^p+2\ve\gamma^p)x_3\,.
\end{equation}
Two more relation we obtain by using condition $|x_1^+-x_1^-|=|x_2^+-x_2^-|$.
Let $t=1-\ve$ be the first splitting point then the condition
$x_1^+-x_1^-=x_2^--x_2^+$ is
\begin{equation}
\label{zigzag1}
c+\frac{\ve c}{1-\ve}=\frac{\ve d_-+(1-2\ve)\gamma
x_2}{1-\ve}-d_+\,.
\end{equation}
The left point $x^+$ is already on the boundary $\partial\Omega$ (the functions
are constants on $I^+$) and we have nothing to split. The left interval $I^-$
we naturally split at the point $t=\ve$. Then the condition
$x_1^+-x_1^-=x_2^+-x_2^-$
\begin{equation}
\label{zigzag2}
c=\gamma x_2-d_-\,.
\end{equation}
From~\eqref{zigzag2} and~\eqref{zigzag1} we get
\begin{equation}
\label{d_pm}
\begin{aligned}
d_-&=\gamma x_2-c\,,
\\
d_+&=\gamma x_2-\frac c{1-2\ve}\,.
\end{aligned}
\end{equation}
Now we can plug in~\eqref{d_pm} into~\eqref{x_2} and obtain in result
\begin{equation}
\label{}
\gamma=1+2\ve\frac{1-\ve}{\;1-2\ve}\cdot\frac c{x_2}\,.
\end{equation}
Let $c_0$ be the limit value of $c$ as $\ve$ tends to 0. (By the way, $c_0$ has
clear geometrical meaning: this is the first coordinate of the end point on
$\partial\Omega$ of the extremal line the second end of which is our initial
$x$.) Then
\begin{align*}
\gamma&\approx 1+2\frac{c_0}{x_2}\ve\,,
\\
\gamma^p&\approx 1+2p\frac{c_0}{x_2}\ve\,,
\\
1-(1-2\ve)\gamma^p&\approx 2(1-\frac{pc_0}{x_2})\ve\,,
\end{align*}
and~\eqref{x_3} turns into equation for $c_0$:
\begin{equation}
\label{c_0}
c_0^p=(1-\frac{pc_0}{x_2})x_3\,,
\end{equation}
which evidently has unique solution $c_0$, $0<c_0<\frac{x_2}p$.
Further we get
\begin{equation}
d_\pm\to x_2-c_0
\end{equation}
and therefore we are able to write down in term of $c_0$ the average we interested in:
\begin{equation}
\label{|g|^p}
\av{\,|g|^p}{[0,1]}=\frac{(d_-^p+d_+^p)\ve}{1-(1-2\ve)\gamma^p}\;\to\;
\frac{2(x_2-c_0)^p}{2(1-\frac{pc_0}{x_2})}=\frac{x_3(x_2-c_0)^p}{c_0^p}\,.
\end{equation}
If we introduce
$$
\omega:=x_3^{-\frac1p}\lim_{\ve\to0}\av{\,|g|^p}{[0,1]}^{\frac1p}\,,
$$
then from~\eqref{|g|^p} we get $c_0=\frac{x_2}{\omega+1}$. We can plug this
expression in~\eqref{c_0} and conclude that $\omega$ is the unique solution of
the equation
$$
x_2^p=(\omega+1)^{p-1}(\omega+1-p)x_3\,,
$$
but this is just the equation $F_p(x_2,0)=F_p(\omega,1)x_3$, whose solution by
definition is
$$
\omega=\Big(\frac{B_{\max}(0,x_2,x_3)}{x_3}\Big)^{\frac1p}\,.
$$
Therefore,
$$
B_{\max}(0,x_2,x_3)=\lim_{\ve\to0}\av{\,|g|^p}{[0,1]}\,,
$$
what yields the desired inequality
$$
B_{\max}(0,x_2,x_3)\le\Bel(0,x_2,x_3)\,.
$$
How to obtain this inequality for arbitrary $x\in\Omega$ was explained in the
beginning of the proof. In the same paper~\cite{SV} the reader can find the
detail explanation how to pass from one splitting to another one (e.g. to a
dyadic family of intervals).

\end{proof}

\section{ Function $u_p$ from function $\Bel$}
\label{shortcut}

We found Burkholder's functions $\Bel_{\max}$ and $\Bel_{\min}$ as claimed in
Theorem~\ref{t1}. As a corollary we immediately we get the sharp constant in
Burkholder's inequality:
\begin{theorem}
\label{pm1}
Let $I=[0,1]$, $\av fI=x_1$, $\av gI =x_2$, $g$ is a Martingale transform of
$f$, and $|\,x_2|\le |\,x_1|$. Then
$$
\av{\,|g|^p}I \le (p^*-1)^p \av{\,|f|^p}I\,.
$$
The constant $p^*-1$, where $p^*:=\max (p, \frac{p}{p-1})$ is sharp.
\end{theorem}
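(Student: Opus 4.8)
The plan is to deduce Theorem~\ref{pm1} from the identification $\Bel_{\max}=B_{\max}$ already established in Theorem~\ref{t1} (via Lemmas~\ref{est} and~\ref{oppos-est}). Given $f,g$ on $I=[0,1]$ with $g$ a martingale transform of $f$, set $x=(x_1,x_2,x_3)=(\av fI,\av gI,\av{|f|^p}I)\in\Omega$. This is an admissible pair, so by the very definition of the Bellman function $\av{|g|^p}I\le\Bel_{\max}(x)=B_{\max}(x)$. Hence everything reduces to the pointwise estimate
\begin{equation*}
B_{\max}(x)\le(p^*-1)^p x_3\qquad\text{whenever }|x_2|\le|x_1|,\ x_3\ge|x_1|^p,
\end{equation*}
together with the sharpness of the constant.

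For the pointwise bound I would exploit the degree-$p$ homogeneity of $F_p$ and the monotonicity recorded after~\eqref{Fpz1}--\eqref{Fpz2}. If $x_3=0$ then $x_1=x_2=0$ and both sides vanish; so assume $x_3>0$ and put $\omega=(B_{\max}(x)/x_3)^{1/p}\ge0$. The defining relation $F_p(|x_2|,|x_1|)=F_p(B_{\max}^{1/p},x_3^{1/p})$ becomes $x_3\,F_p(\omega,1)=F_p(|x_2|,|x_1|)$. Since $p^*-1\ge1$, the hypothesis $|x_2|\le|x_1|$ yields $|x_2|\le(p^*-1)|x_1|$, so we are in the first branch of~\eqref{Fp} and $F_p(|x_2|,|x_1|)=|x_2|^p-(p^*-1)^p|x_1|^p\le|x_1|^p-(p^*-1)^p|x_1|^p\le0$. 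On the other hand the point $(p^*-1,1)$ lies on the common boundary of the two branches of~\eqref{Fp}, where both give $F_p(p^*-1,1)=(p^*-1)^p-(p^*-1)^p=0$. Therefore $F_p(\omega,1)\le0=F_p(p^*-1,1)$, and since $z\mapsto F_p(z,1)$ is strictly increasing on $[0,\infty)$ (because $F_{z_1}>0$ for $z_1>0$), we get $\omega\le p^*-1$, that is, $B_{\max}(x)=\omega^px_3\le(p^*-1)^px_3$. Combined with the first paragraph this gives $\av{|g|^p}I\le(p^*-1)^p\av{|f|^p}I$.

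For sharpness I would look at the corner point $x^0=(0,0,1)\in\Omega$, which is the limiting instance of the constraint $|x_2|\le|x_1|$. There $F_p(0,0)=0$ forces $F_p(\omega,1)=0$, hence $\omega=p^*-1$ and $B_{\max}(x^0)=(p^*-1)^p$. Since $\Bel_{\max}(x^0)=B_{\max}(x^0)=(p^*-1)^p$ and $\Bel_{\max}(x^0)$ is by definition the supremum of $\av{|g|^p}I$ over admissible pairs with $\av fI=\av gI=0$ and $\av{|f|^p}I=1$, for every $\ve>0$ there is such a pair with $\av{|g|^p}I>(p^*-1)^p-\ve$; it satisfies $|\av gI|=0\le0=|\av fI|$ and $\av{|g|^p}I/\av{|f|^p}I>(p^*-1)^p-\ve$, so no constant smaller than $(p^*-1)^p$ can work. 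Alternatively one can feed $x^0$ into the explicit construction of Lemma~\ref{oppos-est} to produce these near-extremal pairs by hand.

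The conceptual work is already contained in Theorem~\ref{t1} and its verification, so there is no genuine obstacle at this stage; the only points requiring a little care are the degenerate case $x_3=0$ and the remark that equality in the inequality is only approached in the limit at the corner $x_1=x_2=0$ (where the hypothesis $|x_2|\le|x_1|$ degenerates to $0\le0$), rather than realized on an open family of extremizers.
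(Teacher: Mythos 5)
Your proposal is correct and takes essentially the same route as the paper: the paper's proof consists precisely of analyzing the implicit formula of Theorem~\ref{t1} to see that $\sup\{\Bel_{\max}(x)/x_3\colon x\in\Omega,\ |x_2|\le|x_1|\}=(p^*-1)^p$, and you have simply written out that analysis in detail (the branch of $F_p$, the monotonicity of $z\mapsto F_p(z,1)$, and the value at the corner $(0,0,1)$ for sharpness).
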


\begin{proof}
We just analyze the form of function $\Bel_{\max}$ from Theorem~\ref{t1} and
immediately see that
$$
\sup_{x\in \Omega,\, |x_2|\le |x_1|} \frac{\Bel_{\max}(x_1,x_2,x_3)}{x_3}\;=\;
(p^*-1)^p\,.
$$

\end{proof}

\begin{theorem}
\label{pm2}
Let $I=[0,1]$, $\av fI=x_1$, $\av gI =x_2$, $g$ is a Martingale transform of
$f$, and $|\,x_2|\le |\,x_1|$. Then
$$
\av{\,|f|^p}I \le (p^*-1)^p \av{\,|g|^p}I\,.
$$
The constant $p^*-1$, where $p^*:=\max (p, \frac{p}{p-1})$ is sharp.
\end{theorem}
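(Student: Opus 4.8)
The plan is to obtain Theorem~\ref{pm2} from Theorem~\ref{pm1} by exploiting the symmetry of the Martingale transform relation. The condition that makes $(f,g)$ an admissible pair, $|(f,h\ci I)|=|(g,h\ci I)|$ for every dyadic interval $I$, is symmetric in $f$ and $g$; hence whenever $g$ is a Martingale transform of $f$, the function $f$ is simultaneously a Martingale transform of $g$. Consequently the pair $(g,f)$ --- with $g$ now in the role of the original function and $f$ in the role of its transform --- is again an admissible pair, with $\av gI=x_2$, $\av fI=x_1$, and $\av{\,|g|^p}I$ in the role of the third coordinate. Feeding this pair into Theorem~\ref{pm1}, together with the hypothesis relating $|x_1|$ and $|x_2|$ --- exactly the inequality that places the constant coordinate on the side required by that theorem --- gives at once
\begin{equation*}
\av{\,|f|^p}I\le(p^*-1)^p\,\av{\,|g|^p}I\,.
\end{equation*}

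Equivalently, and this is the way I would actually present it so as to stay inside the Bellman framework already built: by the same symmetry the supremum $\sup_{f,g}\{\av{\,|f|^p}I\}$ over admissible pairs with $\av gI=z_1$, $\av fI=z_2$ and $\av{\,|g|^p}I=z_3$ is exactly the function $\Bel_{\max}(z_1,z_2,z_3)$ of Theorem~\ref{t1}. Hence $\av{\,|f|^p}I\le\Bel_{\max}(x_2,x_1,\av{\,|g|^p}I)$, and the conclusion follows by reading off
\begin{equation*}
\sup_{z\in\Omega,\;|z_2|\le|z_1|}\frac{\Bel_{\max}(z)}{z_3}=(p^*-1)^p\,,
\end{equation*}
which is precisely the identity used in the proof of Theorem~\ref{pm1} (and since $\av{\,|g|^p}I\ge|x_2|^p$ by Jensen, the point $(x_2,x_1,\av{\,|g|^p}I)$ indeed lies in $\Omega$).

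Sharpness of the constant $p^*-1$ transfers from Theorem~\ref{pm1}. The extremal sequences $f_n,g_n$ produced in Lemma~\ref{oppos-est} by travelling along the extremal trajectories foliating $\Omega$ make the ratio $\av{\,|g_n|^p}I/\av{\,|f_n|^p}I$ tend to $(p^*-1)^p$; interchanging the names of the two functions turns these into sequences along which $\av{\,|f_n|^p}I/\av{\,|g_n|^p}I\to(p^*-1)^p$, so no smaller constant works. The one point that really needs care --- and the main obstacle --- is the bookkeeping of the ``constant part'' of the Haar expansion: one must check that the passage from $(f,g)$ to $(g,f)$, i.e.\ from the point $(x_1,x_2,x_3)$ to the point $(x_2,x_1,\av{\,|g|^p}I)$, genuinely lands in the hypotheses of Theorem~\ref{pm1} --- that the averages sit on the correct side and that the class of admissible test functions is unchanged. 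Once that is settled, Theorem~\ref{pm2} is an immediate corollary of Theorem~\ref{pm1} and of the structure of $\Bel_{\max}$ exhibited in Theorem~\ref{t1}.
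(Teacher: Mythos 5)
Your argument is correct in substance but takes a genuinely different route from the paper's. The paper proves Theorem~\ref{pm2} directly from $\Bel_{\min}$: from the implicit formula of Theorem~\ref{t1} one reads off $\inf\{\Bel_{\min}(x)/x_3:\ x\in\Omega,\ |x_2|\ge|x_1|\}=(p^*-1)^{-p}$, and since $\av{\,|g|^p}I\ge\Bel_{\min}(x_1,x_2,\av{\,|f|^p}I)$ by the very definition of $\Bel_{\min}$, both the inequality and the sharpness follow at once. You instead use the symmetry of the admissibility condition $|(f,h\ci{I})|=|(g,h\ci{I})|$ to interchange $f$ and $g$, reducing Theorem~\ref{pm2} to Theorem~\ref{pm1}, i.e.\ to the identity $\sup\{\Bel_{\max}(z)/z_3:\ z\in\Omega,\ |z_2|\le|z_1|\}=(p^*-1)^p$ applied at $z=(x_2,x_1,\av{\,|g|^p}I)$, a point of $\Omega$ by Jensen, as you note. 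This is legitimate, and the two routes are really two faces of one duality: the equations of Theorem~\ref{t1} show that $B=\Bel_{\min}(x_1,x_2,x_3)$ exactly when $x_3=\Bel_{\max}(x_2,x_1,B)$, so renaming the pair converts the $\Bel_{\min}$ statement into the $\Bel_{\max}$ one. Transferring sharpness by renaming the extremal pairs of Lemma~\ref{oppos-est} is also fine. Your route avoids ever invoking the explicit form of $\Bel_{\min}$; the paper's keeps the two theorems parallel, each read off from its own Bellman function.

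One point you gloss over should be made explicit. After the swap, Theorem~\ref{pm1} needs the average of the transformed function to be dominated by that of the original, i.e.\ $|x_1|\le|x_2|$; likewise your final step needs $|z_2|\le|z_1|$ for $z=(x_2,x_1,\cdot)$, which is again $|x_1|\le|x_2|$. This is the opposite of the hypothesis ``$|x_2|\le|x_1|$'' as printed in the statement. The printed hypothesis cannot be the intended one: for $p=2$ and $|x_2|<|x_1|$ one has $\av{\,|f|^2}I=x_1^2+S>x_2^2+S=\av{\,|g|^2}I$, where $S=\frac1{|I|}\sum_{J}|(f,h\ci{J})|^2$, contradicting the asserted inequality with constant $(p^*-1)^p=1$. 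Indeed the paper's own proof takes the infimum over the region $|x_2|\ge|x_1|$, and the Remark following Theorem~\ref{pm2} records the precise condition $|x_2|\ge(p^*-1)^{-1}|x_1|$; so the hypothesis in the statement is a misprint for $|x_1|\le|x_2|$. Your phrase ``exactly the inequality that places the constant coordinate on the side required by that theorem'' silently performs this correction; state it outright, since as literally written the hypothesis points the wrong way and your reduction would not go through under it.
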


\begin{proof}
We just analyze the form of function $\Bel_{\min}$ from Theorem~\ref{t1} and
immediately see that
$$
\inf_{x\in \Omega,\, |x_2|\ge |x_1|} \frac{\Bel_{\min}(x_1,x_2,x_3)}{x_3}\;=\;
(p^*-1)^{-p}\,.
$$

\end{proof}

\noindent{\bf Remark}. The same analysis shows that $\av{\,|g|^p}I\le (p^*-1)^p
\av{\,|f|^p}I$ if and only if  $|\,x_2|\le (p^*-1)|\,x_1|$ in Theorem
\ref{pm1}, and in Theorem~\ref{pm2} $\av{\,|f|^p}I\le(p^*-1)^p\av{\,|g|^p}I$ if
and only if $|\,x_2|\ge (p^*-1)^{-1}|\,x_1|$.

\bigskip

\noindent{\bf Notation.} Below we use $\beta_p:= (p^*-1)^p$. Put
\begin{align*}
\phi_{\max}(x_1,x_2)\ &:=\sup_{x_3:(x_1,x_2,x_3)\in\Omega}\big[\Bel_{\max}(x_1,x_2,x_3)-\beta_p x_3\big]\,,
\\
\phi_{\min}(x_1,x_2)\ &:=\inf_{x_3:(x_1,x_2,x_3)\in\Omega}\big[\Bel_{\min}(x_1,x_2,x_3)-\beta_p^{-1}x_3\big]\,.
\end{align*}

These functions are defined on the whole $\R^2$.

\noindent{\bf Definition.} If for all pairs of points $x^\pm\in\R^2$ such that
\begin{equation}
\label{zz}
|x_1^+-x_1^-|=|x_2^+-x_2^-|\quad\text{and}\quad x=\frac{x^++x^-}2
\end{equation}
the function $\phi$ on $\R^2$ satisfies the condition
\begin{equation}
\label{zzconc}
\phi(x)-\frac{\phi(x^-)+\phi(x^+)}2\ge 0\,,
\end{equation}
then it is called zigzag concave. If the opposite inequality holds
\begin{equation}
\label{zzconv}
\phi(x)-\frac{\phi(x^-)+\phi(x^+)}2\le 0
\end{equation}
the function $\phi$ is called zigzag convex.
The next theorem gives an independent description of $\phi_{\max}$ and $\phi_{\min}$.

\begin{theorem}
\label{lcm} Function $\phi_{\max}$ is the least zigzag concave majorant of the
function $h_{\max}(x):=|\,x_2|^p-\beta_p |\,x_1|^p$. Function $\phi_{\min}$ is
the greatest zigzag convex minorant of the function
$h_{\min}(x):=|\,x_2|^p-\beta_p^{-1} |\,x_1|^p$.
\end{theorem}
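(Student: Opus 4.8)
My plan is to deduce both assertions from the verification Lemma~\ref{est} by means of the substitution $B(y_1,y_2,y_3):=\Psi(y_1,y_2)+\beta_p y_3$. I will describe the argument for $\phi_{\max}$; the one for $\phi_{\min}$ is entirely dual (reverse all inequalities, replace $\beta_p$ by $\beta_p^{-1}$ and $\Bel_{\max}$ by $\Bel_{\min}$, and use~\eqref{MINconv} together with the second half of Lemma~\ref{est}).

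First I would check that $\phi_{\max}$ is itself a zigzag concave majorant of $h_{\max}$. The majorant property is immediate: plugging $x_3=|\,x_1|^p$ into the supremum defining $\phi_{\max}$ and using the boundary condition $\Bel_{\max}(x_1,x_2,|\,x_1|^p)=|\,x_2|^p$ (see~\eqref{bc}) gives $\phi_{\max}(x_1,x_2)\ge|\,x_2|^p-\beta_p|\,x_1|^p=h_{\max}(x_1,x_2)$, and finiteness of the supremum follows from the facts that $x_3\mapsto\Bel_{\max}(x_1,x_2,x_3)-\beta_p x_3$ is concave by~\eqref{main_max} and tends to the finite limit $F_p(|\,x_2|,|\,x_1|)$ as $x_3\to\infty$. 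For zigzag concavity I would take $x^\pm\in\R^2$ with $|x_1^+-x_1^-|=|x_2^+-x_2^-|$ and $x=\half(x^++x^-)$, pick near-optimal heights $x_3^\pm$ and put $x_3=\half(x_3^++x_3^-)$; convexity of $t\mapsto|t|^p$ forces $|\,x_1|^p\le x_3$, so $(x_1,x_2,x_3)\in\Omega$ and the triple satisfies~\eqref{splitting}, whence~\eqref{main-max} for $\Bel_{\max}$ together with the linearity of the term $\beta_p x_3$ gives $\phi_{\max}(x)\ge\half\big(\phi_{\max}(x^+)+\phi_{\max}(x^-)\big)$ after letting the optimization error tend to $0$.

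Next comes the minimality. Given an arbitrary zigzag concave majorant $\Psi$ of $h_{\max}$, set $B(y):=\Psi(y_1,y_2)+\beta_p y_3$ on $\Omega$. Then $B$ is continuous, $B(x_1,x_2,|\,x_1|^p)=\Psi(x_1,x_2)+\beta_p|\,x_1|^p\ge|\,x_2|^p$, and $B$ satisfies the main inequality~\eqref{MINconc}: in a splitting~\eqref{splitting} the third coordinate averages linearly, so $B(x)-\alpha^-B(x^-)-\alpha^+B(x^+)=\Psi(x_1,x_2)-\alpha^-\Psi(x_1^-,x_2^-)-\alpha^+\Psi(x_1^+,x_2^+)\ge0$, and it is enough to verify this for $\alpha^\pm=\half$ since only dyadic splittings enter the proof of Lemma~\ref{est}. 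Applying Lemma~\ref{est} gives $B\ge\Bel_{\max}$ on $\Omega$, i.e. $\Psi(x_1,x_2)\ge\Bel_{\max}(x_1,x_2,x_3)-\beta_p x_3$ for every admissible $x_3$, and taking the supremum over $x_3$ yields $\Psi\ge\phi_{\max}$. This proves that $\phi_{\max}$ is the least zigzag concave majorant of $h_{\max}$, and the dual computation identifies $\phi_{\min}$ as the greatest zigzag convex minorant of $h_{\min}$.

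The point I expect to require the most care is the continuity used when feeding a competitor $\Psi$ into Lemma~\ref{est}. In the rotated coordinates $y_1+y_2,\ y_2-y_1$ a zigzag concave function is separately concave, so I would invoke the classical facts that a measurable midpoint-concave function of one variable is concave (hence $\Psi$ is genuinely concave along each slope-$\pm1$ line, which also upgrades the midpoint form of the main inequality to the general weights appearing in~\eqref{splitting}) and that a separately concave function on an open subset of $\R^2$ is continuous. If instead one states the theorem for continuous competitors from the outset, this obstacle evaporates and the two steps above go through without change.
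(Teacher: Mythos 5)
Your proposal is correct and follows essentially the same route as the paper: zigzag concavity and the majorant property of $\phi_{\max}$ are obtained from near-optimal heights $x_3^\pm$ together with the main inequality and the boundary condition, and minimality is obtained by lifting a competitor to $B(y)=\Psi(y_1,y_2)+\beta_p y_3$ and invoking Lemma~\ref{est}, exactly as in the paper's proof. The only difference is your explicit attention to the continuity/midpoint-vs-general-weights issues for the competitor, which the paper passes over in silence, and your treatment of it is a reasonable refinement rather than a departure.
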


\noindent{\bf Remark.} Notice that this is slightly counterintuitive:
$\Bel_{\max}(x)-\beta_p x_3$ is zigzag concave for any fixed $x_3$, and the
supremum of concave functions is {\it not} usually concave. The same is true
about infimum of convex functions.

\begin{proof}
Let $x^\pm$ and $x$ are as in~\ref{zz}. It is obvious that $\phi_{\max}$ is
zigzag concave. One verifies this just by definition. In fact, if for any
$x^-\in \R^2$ we can choose $x_3^-$ such that the supremum in the definition of
$\phi_{\max}$ is almost attained, i.e., $\Bel_{\max}(x^-)-\beta_p x_3^->
\phi_{\max}(x_-)-\ve$ for a given $\ve$ and the same for $x^+\in \R^2$. We
define $x_3=\frac{x_3^-+ x_3^+}2$ and $\tilde x=(x,x_3)$. Then
using~\eqref{MINconc} we can write
\begin{align*}
\phi_{\max}(x)&\ge\Bel_{\max}(\tilde x)-\beta_p x_3
\\
&\ge \frac{\Bel_{\max}(\tilde x^-)+\Bel_{\max}(\tilde x^+)}2
-\beta_p\frac{x_3^-+x_3^+}2
\\
&\ge\frac{\phi_{\max}(x^-)+\phi_{\max}(x^+)}2-\ve\,,
\end{align*}
what yields~\eqref{zzconc}. Inequality~\eqref{zzconv} is totally similar.

\medskip

As $\sup$ is bigger than $\lim$ we conclude
$$
\phi_{\max}(x)\ge\lim_{x_3\to|x_1|^p}\big[\Bel_{\max}(\tilde x)-\beta_p
x_3\big] = |\,x_2|^p-\beta_p |\,x_1|^p = h_{\max}(x)\,.
$$
As $\inf$ is smaller than $\lim$ we get analogously
$$
\phi_{\min}(x) \le \lim_{x_3\to|x_1|^p}\big[\Bel_{\min}(\tilde x)-\beta_p^{-1}
x_3\big] = |\,x_2|^p-\beta_p^{-1} |\,x_1|^p = h_{\min}(x)\,.
$$
This is because the boundary values of $\Bel_{\max}$ and $\Bel_{\min}$ are
$|\,x_2|^p$.

\medskip

We are left to see that $\phi_{\max}$ is the {\it least} such majorant (and a
symmetric claim for $\phi_{\min}$). Let $\psi$ be a zigzag concave function
such that
\begin{equation}
\label{psih}
\phi_{\max}\ge \psi \ge h_{\max}\,.
\end{equation}
Consider function $\Psi(\tilde x):=\psi(x)+\beta_p x_3$. It is immediate that
$\Psi$ satisfies~\eqref{MINconc}. On the boundary of $\Omega$ we have
$\Psi(x)\ge|\,x_2|^p$, this is just by the right hand side of~\eqref{psih}.
Then Lemma~\ref{est} yields
$$
\Psi(\tilde x)\ge\Bel_{\max}(\tilde x)\,.
$$
Then, obviously,
$$
\psi(x)=\sup_{x_3\colon\tilde x\in \Omega}\big[\Psi(\tilde x) -\beta_p x_3\big]
\ge \sup_{x_3\colon\tilde x\in \Omega}\big[\Bel_{\max}(\tilde x)-\beta_p
x_3\big]= \phi_{\max}(x)\,.
$$
So we proved that $\phi_{\max}$ is the least zigzag concave majorant of
$h_{\max}$. Symmetric consideration will bring us the fact that $\phi_{\min}$
is the largest zigzag convex minorant of $h_{\min}$.

\end{proof}

The reader should look now at function $F_p$ from Theorem~\ref{t1}. It would be
interesting to obtain the formulae for $\phi_{\max}$ and $\phi_{\min}$,
especially using this $F_p$. It would be also interesting to understand the
role of function
\begin{equation}
\label{upf}
u_p(x_1,x_2):= p(1-\frac1{p^*})^{p-1} (|x_1|+|x_2|)^{p-1}( |x_2|- (p^*-1)|x_1|)\,,
\end{equation}
mentioned in the introduction and used repeatedly by Burkholder. May be it is
equal to $\phi_{\max}$? The answer is ``no'', but we can prove the following
\begin{theorem}
\label{fla}
\begin{equation}
\label{flaeq}
\phi_{\max}(x_1,x_2) = F_p(|\,x_2|, |\,x_1|)\,.
\end{equation}
\end{theorem}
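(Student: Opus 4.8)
The plan is to identify $\phi_{\max}$ with $F_p(|x_2|,|x_1|)$ by verifying that the right-hand side has exactly the three properties that characterize $\phi_{\max}$ according to Theorem~\ref{lcm}: it is zigzag concave, it majorizes $h_{\max}(x)=|x_2|^p-\beta_p|x_1|^p$, and it is the least such function. By symmetry and evenness of $F_p$ in each argument (which follows from the evenness of $\Bel_{\max}$ in $x_1,x_2$, or can be checked directly from~\eqref{Fp}), it suffices to work in the quadrant $x_1,x_2\ge 0$ and set $\psi(x):=F_p(x_2,x_1)$.

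First I would check the majorization $\psi\ge h_{\max}$. On the region $x_2\le (p^*-1)x_1$ the two functions coincide by the first branch of~\eqref{Fp}, so equality holds there; on $x_2\ge (p^*-1)x_1$ one must show $F_p(x_2,x_1)-x_2^p+\beta_p x_1^p\ge 0$, which reduces (using homogeneity of degree $p$) to a one-variable inequality in $t=x_1/x_2\in[0,1/(p^*-1)]$ that vanishes to second order at $t=1/(p^*-1)$ and has the correct convexity; this is a routine calculus check that I would relegate to a short computation. Second, the zigzag concavity of $\psi$ should be extracted directly from the construction of $\Bel_{\max}$ in the previous section: indeed, $F_p(x_2,x_1)=\lim_{x_3\to x_1^p}\bigl[\Bel_{\max}(x)-\beta_p x_3\bigr]$ is exactly the boundary value of the zigzag-concave function $\Bel_{\max}(\cdot)-\beta_p x_3$ along the lower boundary $x_3=x_1^p$, and along the extremal trajectories the quantity $\Bel_{\max}(x)-\beta_p x_3$ is affine; so the zigzag concavity of $\psi$ follows by restricting the zigzag concavity inequality~\eqref{MINconc} to boundary points and passing to the limit, much as in the proof of Theorem~\ref{lcm}. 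Alternatively one can verify directly that the Hessian-type matrices $\psi_{x_ix_i}$ along directions $b$ with $|b_1|=|b_2|$ are $\le 0$ away from the creases $x_1=0$, $x_2=0$, $x_2=(p^*-1)x_1$, and that the jumps of the first derivative across those creases have the right sign — the same computations already alluded to (``left as an exercise'') in Section~5.

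Finally, for minimality I would argue exactly as in the last part of the proof of Theorem~\ref{lcm}: if $\psi_0$ is any zigzag concave function with $\phi_{\max}\ge\psi_0\ge h_{\max}$, form $\Psi_0(\tilde x):=\psi_0(x)+\beta_p x_3$, note that $\Psi_0$ then satisfies the main inequality~\eqref{MINconc} and the boundary bound $\Psi_0\ge|x_2|^p$ on $\partial\Omega$, so Lemma~\ref{est} gives $\Psi_0\ge\Bel_{\max}$; taking the supremum over admissible $x_3$ yields $\psi_0\ge\phi_{\max}$, hence $\psi_0\ge F_p(|x_2|,|x_1|)$ once we know $\phi_{\max}=F_p(|x_2|,|x_1|)$ — but since we want to prove that identity, the clean way is: $F_p(|x_2|,|x_1|)$ is zigzag concave and majorizes $h_{\max}$, so by the minimality statement already proved for $\phi_{\max}$ in Theorem~\ref{lcm} we get $\phi_{\max}\le F_p(|x_2|,|x_1|)$, while the reverse inequality $\phi_{\max}\ge F_p(|x_2|,|x_1|)$ comes from the limit computation $\phi_{\max}(x)\ge\lim_{x_3\to|x_1|^p}[\Bel_{\max}(\tilde x)-\beta_p x_3]$ together with the explicit form of $\Bel_{\max}$ from Theorem~\ref{t1}, since on the boundary equation $F_p(|x_2|,|x_1|)=F_p(x_3^{1/p},\Bel_{\max}^{1/p})$ with $x_3=|x_1|^p$ forces $\Bel_{\max}=|x_2|^p$ and, slightly more, analyzing $\Bel_{\max}(x)-\beta_p x_3$ along the vertical direction shows its supremum over $x_3$ is attained in the limit $x_3\to|x_1|^p$ precisely when $x_2\ge (p^*-1)x_1$, giving the second branch of $F_p$, and is attained at the "gluing" value otherwise, giving the first branch.

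The main obstacle I expect is the middle step: cleanly proving the zigzag concavity of $F_p(|x_2|,|x_1|)$ on all of $\R^2$, including the correct behavior of the first-derivative jumps across the three crease hyperplanes $x_1=0$, $x_2=0$, and $x_2=(p^*-1)|x_1|$. Everything else is either a direct appeal to Lemma~\ref{est} and Theorem~\ref{lcm} or a short one-variable calculus estimate; but the crease analysis requires the same careful sign-of-the-jump bookkeeping that Section~5 deferred, and getting all the cases ($p>2$ versus $p<2$, and the two families $3_2)$, $4_2)$) to line up is where the real work lies. If one is willing to invoke that $\phi_{\max}$ is by Theorem~\ref{lcm} already characterized as the least zigzag concave majorant, the cleanest route is to avoid re-deriving zigzag concavity of $F_p$ from scratch and instead deduce it from the identity $F_p(|x_2|,|x_1|)=\lim_{x_3\to|x_1|^p}[\Bel_{\max}(\tilde x)-\beta_p x_3]$ and the already-established biconcavity~\eqref{MINconc} of $\Bel_{\max}$, which confines the only genuinely new computation to the one-variable majorization check.
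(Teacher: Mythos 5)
There is a genuine gap, and it sits at the center of your argument. The identity you lean on twice --- $F_p(|x_2|,|x_1|)=\lim_{x_3\to|x_1|^p}\bigl[\Bel_{\max}(\tilde x)-\beta_p x_3\bigr]$ --- is false: since $\Bel_{\max}=|x_2|^p$ on the boundary $x_3=|x_1|^p$, that limit equals $h_{\max}(x)=|x_2|^p-\beta_p|x_1|^p$, which is strictly smaller than $F_p(|x_2|,|x_1|)$ precisely in the region $|x_2|>(p^*-1)|x_1|$ where the theorem has content. Consequently your derivation of zigzag concavity of $F_p(|x_2|,|x_1|)$ by ``restricting \eqref{MINconc} to the boundary'' collapses (the boundary restriction is $h_{\max}$, which is not zigzag concave), and so does your proof of the lower bound $\phi_{\max}\ge F_p(|x_2|,|x_1|)$. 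Your description of the vertical behaviour is also backwards: for $|x_2|\ge(p^*-1)|x_1|$ the supremum of $x_3\mapsto\Bel_{\max}-\beta_p x_3$ is approached only as $x_3\to+\infty$ (expanding the implicit equation of Theorem~\ref{t1} near $\omega=p^*-1$ gives $\Bel_{\max}-\beta_p x_3\to F_p(|x_2|,|x_1|)$ in that limit), while $x_3\to|x_1|^p$ yields only $h_{\max}$; in the complementary region $\Bel_{\max}-\beta_p x_3$ does not depend on $x_3$ at all. So the half of your plan that was supposed to be a direct appeal is the part that fails, and the half you defer --- the direct verification that $F_p(|x_2|,|x_1|)$ is zigzag concave, creases included --- is the real work and is not mere sign bookkeeping: for $1<p<2$ the first branch $|x_2|^p-\beta_p|x_1|^p$ is \emph{not} concave along diagonal lines crossing $x_2=0$ (the $|x_2|^p$ term has the wrong curvature there), so that route needs genuine modification rather than patience.

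For comparison, the paper never proves zigzag concavity of $F_p(|x_2|,|x_1|)$ and never plays the minimality clause of Theorem~\ref{lcm} against $F_p$. Working with $p>2$, it first disposes of the region $|x_2|\le(p-1)|x_1|$ by the explicit branch $\Bel_{\max}=|x_2|^p+(p-1)^p(x_3-|x_1|^p)$, for which $\Bel_{\max}-\beta_p x_3$ is independent of $x_3$, so $\phi_{\max}=h_{\max}=F_p$ there. In the region $|x_2|>(p-1)|x_1|$ it uses $p$-homogeneity to reduce everything to the segment $S=\{x_1+x_2=1,\ 0\le px_1<1\}$, proves that $\phi_{\max}$ is \emph{linear} on $S$ by combining its zigzag concavity with the reverse midpoint inequality, the latter coming from the fact that $\Bel_{\max}$ is linear along its non-vertical extremal trajectories (which keep $x_1+x_2$ fixed and hit the plane $x_1=0$), and then identifies the two linear functions $\phi_{\max}$ and $F_p$ on $S$ by a one-sided derivative comparison at the tangency point $x_p=(1/p,1-1/p)$, where both vanish together with $h_{\max}$. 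If you want to salvage your scheme, the missing lower bound $\phi_{\max}\ge F_p$ can be recovered from the $x_3\to+\infty$ asymptotics indicated above, and the upper bound from the paper's linearity-on-$S$ argument; as written, neither direction of your proposal is complete.
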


\begin{proof}
We shall consider only the case $p>2$, the case $p<2$ is similar. Due to the
symmetry with respect to the change of $x_1$ to $-x_1$ and $x_2$ to $-x_2$ it
is enough to check equality~\eqref{flaeq} in the quadrant $x_1>0$, $x_2>0$. If
$x_2\le(p-1)x_1$, we get an explicit formula for $\Bel_{\max}$ from
Theorem~\ref{t1}: $\Bel_{\max}(\tilde x)= x_2^p+(p-1)^p(x_3-x_1^p)$, and
therefore,
$$
\phi_{\max}(x_1,x_2)=\sup_{x_3:\tilde x \in\Omega} \big[\Bel_{\max}(\tilde
x)-(p-1)^p x_3\big]=x_2^p-(p-1)^px_1^p=F_2(x_2,x_1)\,.
$$
So in the rest of the proof we shall consider only the domain
$\{x=(x_1,x_2)\colon 0\le (p-1)x_1<x_2\}$. Moreover, since both functions
$\phi_{\max}$ and $F_p$ are $p$-homogeneous, it is sufficient to
check~\eqref{flaeq} on the interval $S:=\{x\colon 0\le px_1<1, x_1+x_2=1\}$.
(Indeed, the condition $px_1<1$ on the line $x_1+x_2=1$ means $x_2>(p-1)x_1$.)

The function $F_p$ is linear on $S$: $F_p(1-x_1, x_1)=
\frac{(p-1)^{p-1}}{p^{p-2}}(1-px_1)$. Now we check that $\phi_{\max}$ is linear
as well. To this end we check inequality
\begin{equation}
\label{phi-conv}
\phi_{\max}(x)\le\frac{\phi_{\max}(x_1-a,x_2+a)+\phi_{\max}(x_1+a,x_2-a)}2
\end{equation}
for all $x\in S$ and sufficiently small $a$, this just means linearity of
$\phi_{\max}$ on $S$, because the opposite inequality  follows from the zigzag
concavity of $\phi_{\max}$~\eqref{zzconc}.

Fix $x\in S$ and $\ve>0$. Take $x_3$ such that $B(\tilde x) -\beta_p x_3 \ge
\phi_{\max}(x)-\ve$. Due to condition $x_2>(p-1)x_1$ the extremal trajectory
$L_x$ of $\Bel_{\max}$ passing through the point $\tilde x=(x,x_3)$ is not
vertical, it hits at some point the plane $x_1=0$. Therefore we can take two
different points $\tilde x^\pm=(x^\pm,x_3^\pm)$ on $L_x$ such that $\tilde x=
\half(\tilde x^++\tilde x^-)$. We know
three things:
\begin{align*}
\Bel_{\max}(\tilde x) -\beta_p x_3 &\ge \phi_{\max}(x)-\ve\,,
\\
\Bel_{\max}(\tilde x^+) -\beta_p x_3^+ &\le \phi_{\max}(x^+)\,,
\\
\Bel_{\max}(\tilde x^-) -\beta_p x_3^- &\le \phi_{\max}(x^-)\,.
\end{align*}
Since the function $\Bel_{\max}$ is linear along $L_x$, we can write the
following chain of inequalities
\begin{align*}
\phi_{\max}(x)-\ve&\le\Bel_{\max}(\tilde x) -\beta_p x_3
\\
&=\frac{\big[\Bel_{\max}(\tilde x^+)-\beta_p x_3^+\big]+
\big[\Bel_{\max}(\tilde x^-) -\beta_p x_3^-\big]}2
\\
&\le\frac{\phi_{\max}(x^+)+\phi_{\max}(x^-)}2\,.
\end{align*}
Since $\ve$ is arbitrary, we come to the desired convexity~\eqref{phi-conv}.

Function $F_p(x_2, x_1)$ is a concave $C^1$-smooth function majorazing
$h_{\max}$ on $S$. This is immediate from its formula. Functions
$\phi_{\max}(x_1,x_2)$ and $F_p(x_2, x_1)$  are linear on $S$ and at the point
$x=x_p=:(\,\frac1p\,,1\!-\!\frac1p)$ both are equal $h_{\max}(x_p)=0$.
Therefore, to prove that they are identical it is sufficient to check that
their derivatives at $x_p$ along $S$ are equal as well. Since $\phi_{\max}$ is
a majorant of $h_{\max}$ and both functions are equal at $x_p$, then the left
derivative of $\phi_{\max}$ at $x_p$ is not grater than the derivative of
$h_{\max}$ at this point. On the other hand, since $\phi_{\max}$ is the least
majorant it is not grater then $F_p$, i.e., its left derivative at $x_p$ is not
less that the derivative of $F_p$ there, but latter coincides with the
derivative of $h_{\max}$. Hence all three derivatives along $S$ are equal at
the point $x_p$ and we proved $\phi_{\max}(x_1, x_2) = F_p(x_2, x_1)$.

\end{proof}

\begin{theorem}
\label{flamin}
\begin{equation}
\label{flamineq}
\phi_{\min}(x_1,x_2) =- (p^*-1)^{-p}F_p(|\,x_1|, |\,x_2|)\,.
\end{equation}
\end{theorem}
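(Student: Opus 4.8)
The plan is to read the identity off the implicit description of $\Bel_{\min}$ furnished by Theorem~\ref{t1}, rather than to redo the ``least zigzag convex minorant'' argument. Throughout put $\beta_p=(p^*-1)^p$, so the claim reads $\phi_{\min}(x_1,x_2)=-\beta_p^{-1}F_p(|x_1|,|x_2|)$. By the evenness of $\Bel$ (and of $F_p(|x_1|,|x_2|)$) in $x_1$ and in $x_2$ we may assume $x_1,x_2\ge0$; and $p=2$ is trivial, since then $F_2(z_1,z_2)=z_1^2-z_2^2$, $\beta_2=1$, $\Bel_{\min}=x_3+x_2^2-x_1^2$, and both sides equal $x_2^2-x_1^2$. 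So fix $p\ne2$ and $x_1,x_2\ge0$, and write $\kappa:=F_p(x_1,x_2)$.

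For $x_3\ge x_1^p$ set $s:=x_3^{1/p}$ and $b:=\Bel_{\min}(x_1,x_2,x_3)^{1/p}$. By Theorem~\ref{t1} and the fact (from Section~5) that $F_p$ is strictly increasing in its first and strictly decreasing in its second argument, the map $x_3\mapsto(s,b)$ is a homeomorphism of $[x_1^p,\infty)$ onto the arc $\Gamma:=\{(s,b):F_p(s,b)=\kappa,\ s\ge x_1\}$, with $b$ increasing in $s$ and $(s,b)=(x_1,x_2)$ at $x_3=x_1^p$ (use~\eqref{bc}). Since
\begin{equation*}
\Bel_{\min}(x_1,x_2,x_3)-\beta_p^{-1}x_3=b^p-\beta_p^{-1}s^p=\beta_p^{-1}\bigl(\beta_p b^p-s^p\bigr),
\end{equation*}
computing $\phi_{\min}(x_1,x_2)$ is the same as minimizing $\beta_p b^p-s^p$ over $\Gamma$. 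The main ingredient is the pointwise inequality
\begin{equation}
\label{keyineqplan}
F_p(z_1,z_2)\ \ge\ z_1^p-(p^*-1)^pz_2^p\qquad(z_1,z_2\ge0),
\end{equation}
an equality exactly when $z_1\le(p^*-1)z_2$: for $z_1\le(p^*-1)z_2$ this is the first line of~\eqref{Fp}; for $z_1\ge(p^*-1)z_2$ it is the assertion that the second branch of $F_p$ dominates $z_1^p-(p^*-1)^pz_2^p$, which is precisely the majorant property of Burkholder's function $F_p(|x_2|,|x_1|)$ recalled in the Introduction and which, after homogenizing to $z_2=1$, is a one--variable inequality on $[p^*-1,\infty)$. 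Feeding $(z_1,z_2)=(s,b)$ into~\eqref{keyineqplan} gives $\kappa\ge s^p-\beta_p b^p$ on $\Gamma$, i.e.\ $\beta_p b^p-s^p\ge-\kappa$, so $\phi_{\min}(x_1,x_2)\ge-\beta_p^{-1}\kappa=-\beta_p^{-1}F_p(x_1,x_2)$.

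For the reverse inequality it suffices to make $s^p-\beta_p b^p$ approach $\kappa$ along $\Gamma$. If $x_1\le(p^*-1)x_2$ (equivalently $\kappa\le0$), equality in~\eqref{keyineqplan} already holds at the endpoint $(s,b)=(x_1,x_2)$ of $\Gamma$, where $\Bel_{\min}=x_2^p$ by~\eqref{bc}, so $\phi_{\min}(x_1,x_2)\le x_2^p-\beta_p^{-1}x_1^p=-\beta_p^{-1}\kappa$. If $x_1\ge(p^*-1)x_2$ (so $\kappa\ge0$), then $\Gamma$ lies in the second branch of $F_p$ — on $\Gamma$ one has $s-(p^*-1)b>0$, since $F_p$ vanishes on the interface $s=(p^*-1)b$ whereas $F_p=\kappa\ge0$ on $\Gamma$; moreover $\Gamma$ is unbounded in $s$, and since $F_p(s,b)=c_p(s+b)^{p-1}\bigl(s-(p^*-1)b\bigr)$ with $c_p:=p\bigl(1-\tfrac1{p^*}\bigr)^{p-1}=p\bigl(\tfrac{p^*-1}{p^*}\bigr)^{p-1}$ is held equal to $\kappa$ while $(s+b)^{p-1}\to\infty$, necessarily $b\to\infty$ and $w:=s-(p^*-1)b\to0^+$ along $\Gamma$. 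A Taylor expansion then gives
\begin{equation*}
s^p-\beta_p b^p=\bigl((p^*-1)b+w\bigr)^p-(p^*-1)^pb^p=p(p^*-1)^{p-1}b^{p-1}w\,(1+o(1)),
\end{equation*}
while $\kappa=c_p\bigl(p^*b+w\bigr)^{p-1}w=c_p(p^*)^{p-1}b^{p-1}w\,(1+o(1))$; dividing and using $p(p^*-1)^{p-1}=c_p(p^*)^{p-1}$ yields $s^p-\beta_p b^p\to\kappa$, hence $\beta_p b^p-s^p\to-\kappa$ and $\phi_{\min}(x_1,x_2)\le-\beta_p^{-1}\kappa$. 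Combining, $\phi_{\min}(x_1,x_2)=-\beta_p^{-1}F_p(x_1,x_2)=-(p^*-1)^{-p}F_p(|x_1|,|x_2|)$, which is~\eqref{flamineq}.

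The step I expect to be the genuine obstacle is the pointwise bound~\eqref{keyineqplan} in the second branch — equivalently, that $-\beta_p^{-1}F_p(|x_1|,|x_2|)$ is a minorant of $h_{\min}$. It is not a formal consequence of what precedes; it is Burkholder's majorant estimate for $u_p$/$F_p$, and one proves it directly from~\eqref{Fp} (homogenize to $z_2=1$ and compare $t\mapsto c_p(t+1)^{p-1}(t-(p^*-1))$ with $t\mapsto t^p-(p^*-1)^p$ on $[p^*-1,\infty)$: the difference and its first derivative vanish at $t=p^*-1$, where the difference has positive second derivative $p(p-1)(p^*-1)^{p-2}\tfrac{p^*-2}{p^*}$). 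A minor point is keeping the $o(1)$'s uniform in the last limit; it goes through verbatim for $1<p<2$, since every formula above is written in terms of $p^*$ and $c_p$. Finally, a fully parallel alternative is to mirror the proof of Theorem~\ref{fla}: reduce by evenness and $p$-homogeneity to the segment $\{x_1+x_2=1,\ \tfrac{p^*-1}{p^*}\le x_1\le1\}$, on which $-\beta_p^{-1}F_p(x_1,x_2)$ is affine, prove $\phi_{\min}$ affine there using that the extremal trajectory of $\Bel_{\min}$ through a near-optimal point is non-vertical, and match the two affine functions at $(\tfrac{p^*-1}{p^*},\tfrac1{p^*})$, where both vanish and their one-sided derivatives along the segment coincide with that of $h_{\min}$.
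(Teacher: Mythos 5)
Your argument is correct, but it takes a genuinely different route from the paper's. The paper proves Theorem~\ref{flamin} ``absolutely similarly'' to Theorem~\ref{fla}: reduce by symmetry and $p$-homogeneity to a segment $x_1+x_2=\const$, use the explicit branch of $\Bel_{\min}$ on one side of the interface, prove linearity of $\phi_{\min}$ on the segment via the non-vertical extremal trajectories, and match one-sided derivatives with $h_{\min}$ at the tangency point (your final ``parallel alternative'' is exactly this). Instead, you minimize $\Bel_{\min}-\beta_p^{-1}x_3$ directly along the level curve $F_p(s,b)=F_p(x_1,x_2)$ furnished by Theorem~\ref{t1}, getting the lower bound from the pointwise majorization $F_p(z_1,z_2)\ge z_1^p-(p^*-1)^pz_2^p$ and the upper bound from the boundary point $x_3=x_1^p$ (when $x_1\le(p^*-1)x_2$) or from the asymptotics $w=s-(p^*-1)b\to0^+$, $b\to\infty$ along the level curve (when $x_1\ge(p^*-1)x_2$); the expansion and the constant identity $c_p(p^*)^{p-1}=p(p^*-1)^{p-1}$ check out, and the trade-off is that you replace the trajectory-geometry argument by one-variable analysis of $F_p$. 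Two remarks. First, your assessment that the key inequality ``is not a formal consequence of what precedes'' is off: it follows immediately from Theorem~\ref{lcm} (which gives $\phi_{\max}\ge h_{\max}$) combined with Theorem~\ref{fla} ($\phi_{\max}(x_1,x_2)=F_p(|x_2|,|x_1|)$), so you may simply cite these. Second, if you insist on the direct verification, the endpoint data at $t=p^*-1$ (equal value, equal first derivative, positive second derivative of the difference) only gives the inequality locally; to claim it on all of $[p^*-1,\infty)$ you need a global argument (e.g.\ monotonicity of the derivative of the difference, or Burkholder's own proof), so as written that sketch is the one incomplete step -- best repaired by the citation above.
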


The proof of this Theorem is absolutely similar to the proof of Theorem~\ref{fla}.

\bigskip

Burkholder often used function $u_p$ from~\eqref{upf}. To demystify it let us
notice that it is also $p$-homogeneous and as such can be considered only on
the segment $x_1+x_2=1$, $x_i>0$. On this segment function $u_p$ becomes
linear. It is a majorant of $h_p$, and its graph is tangent to the graph $h_p$
exactly at point $x_p$ on $S$, where $h_p$ vanishes. It is not the least zigzag
concave function greater than $h_p$ (of course not, $\phi_{\max}$ is such), but
it is the least zigzag concave function larger than $h_p$ and such that on all
segments $\{x\colon x_i>0,\, x_1+x_2=\const\}$ it is not only concave, but also
linear. (Keeping in mind the symmetries $x\to -x_1, x_2\to -x_2$ we can
consider the first quadrant only.)

\medskip

This is already proved, and we leave the detailed reasoning to the reader. One
more thing we want to mention is that we could have considered a slightly more
general problem. Namely, instead of majorazing the function $h_{\max}(x_1, x_2)
= |\,x_2|^p-(p^*-1)^p|\,x_1|^p$ we could have started with any function
$$
h_c(x_1, x_2):=|x_2|^p-c\,|x_1|^p\,.
$$
The reader can easily see that we have proved the following theorem (of course
Burkholder already proved the most of it long ago).

\begin{cor}
\label{Hc} The smallest $c$ for which there exists a zigzag concave function
$\phi_c$ majorazing $h_c$ is equal to $(p^*-1)^p$. For this $c$ the least
zigzag majorant is $F_p(|\,x_1|, |\,x_2|)$. The smallest $c$ or which there
exists a zigzag concave function $\phi_c$ majorazing $h_c$ such that it is
linear on the segment $\{x\colon x_i>0,\, x_1+x_2=\const\}$, symmetric and
$p$-homogeneous is equal to $(p^*-1)^p$. For this $c$ the least zigzag majorant
linear on $\{x\colon x_i>0,\, x_1+x_2=\const\}$, symmetric and $p$-homogeneous
is $u_p(x_1, x_2)$.
\end{cor}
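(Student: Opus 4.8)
The plan is to derive all four assertions by combining the verification machinery of Section~4 (Lemma~\ref{est}) with Theorems~\ref{lcm} and~\ref{fla} and a short direct analysis of $u_p$ on a single segment; apart from two Taylor expansions no new computation is required.

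First I would pin down the optimal constant. Let $\phi_c$ be a continuous zigzag concave majorant of $h_c$ and put $\Psi(x_1,x_2,x_3):=\phi_c(x_1,x_2)+c\,x_3$ on $\Omega$. Since $x_3$ is affine along every admissible splitting~\eqref{splitting} while $\phi_c$ ignores $x_3$, the function $\Psi$ satisfies the main inequality~\eqref{MINconc}, and on the boundary parabola $\Psi=\phi_c+c|\,x_1|^p\ge h_c+c|\,x_1|^p=|\,x_2|^p$. Hence Lemma~\ref{est} gives $\Psi\ge\Bel_{\max}$, and evaluating at $x_1=x_2=0$, where Theorem~\ref{t1} yields $\Bel_{\max}(0,0,x_3)=(p^*-1)^px_3$, we obtain $\phi_c(0,0)\ge\bigl((p^*-1)^p-c\bigr)x_3$ for every $x_3\ge0$, so $c\ge(p^*-1)^p$. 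Conversely $h_{(p^*-1)^p}=h_{\max}$, which by Theorem~\ref{fla} admits the zigzag concave majorant $\phi_{\max}$; by Theorem~\ref{lcm} it is the least one, and by Theorem~\ref{fla} it equals $F_p(|\,x_2|,|\,x_1|)$. A member of the constrained class is in particular a zigzag concave majorant of $h_c$, so the bound $c\ge(p^*-1)^p$ holds there too, and it will be attained once $u_p$ is placed in the class.

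Next I would check that $u_p$ lies in the constrained class, taking $p>2$ (the case $p<2$ being symmetric through $p^*$). From formula~\eqref{upf} it is at once $p$-homogeneous, even in each variable, and affine on every segment $\{x_i>0,\ x_1+x_2=\const\}$. It is zigzag concave: in the direction $(1,-1)$ this is the affineness just noted, and in the direction $(1,1)$ it follows because $u_p$ coincides with $\phi_{\max}$ on the cone $C:=\{|\,x_2|\ge(p^*-1)|\,x_1|\}$ (compare~\eqref{upf} with Theorem~\ref{fla}, where $\phi_{\max}$ is zigzag concave) while on $\R^2\setminus C$ a one-line second-derivative computation gives concavity, the two pieces joining $C^1$-ly across $\partial C$ thanks to the normalization built into~\eqref{Fp}. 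Finally $u_p\ge h_{\max}$: by homogeneity and evenness this reduces to the segment $S=\{x_1+x_2=1,\ x_i>0\}$, where $u_p|_S$ is the affine tangent to $h_{\max}|_S$ at $x_p=(\frac1p,1-\frac1p)$; on the $x_2$-axis side of $x_p$ it equals $\phi_{\max}|_S\ge h_{\max}|_S$, and on the other side $h_{\max}|_S$ is concave and therefore lies below that tangent.

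The heart of the matter — and the step I expect to be the main obstacle — is that $u_p$ is the least, indeed the only, member of the constrained class lying above $h_{\max}$. Let $\psi$ be such a function; by homogeneity and evenness it is determined by $\psi|_S$, which is affine, say $\psi(s,1-s)=A+Bs$. Testing zigzag concavity of $\psi$ at the axis point $(0,1)$ in the direction $(1,1)$ and using evenness in $x_1$ to fold the two test points back onto $S$, a short expansion yields $pA+B\le0$; conversely $\psi(\frac1p,1-\frac1p)\ge h_{\max}(x_p)=0$ gives $pA+B\ge0$, whence $B=-pA$ and $\psi(s,1-s)=A(1-ps)$. Then $\psi|_S-h_{\max}|_S\ge0$ vanishes at the interior point $x_p$, so its derivative there is zero, which forces $-pA$ to equal the slope of $h_{\max}|_S$ at $x_p$ and hence $A$ to equal the coefficient occurring in $u_p|_S$; thus $\psi=u_p$, and $u_p$ is the least such majorant. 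The point requiring the most care is this folding step: one must confirm that zigzag concavity may legitimately be tested on the boundary ray $x_1=0$ (it may — it is imposed on all pairs in $\R^2$) and carry the expansion to the order that produces a sharp inequality. The remainder is the bookkeeping already foreshadowed in the paragraph preceding the corollary and carried out by Burkholder in~\cite{Bu1}.
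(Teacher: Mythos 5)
Your handling of the first two assertions is correct and is essentially the paper's intended route: for a (continuous) zigzag concave majorant $\phi_c$ of $h_c$ the function $\Psi=\phi_c(x_1,x_2)+c\,x_3$ satisfies~\eqref{MINconc} and dominates $|x_2|^p$ on the parabola, so Lemma~\ref{est} together with $\Bel_{\max}(0,0,x_3)=(p^*-1)^px_3$ (read off from Theorem~\ref{t1}) forces $c\ge(p^*-1)^p$, while Theorems~\ref{lcm} and~\ref{fla} give attainment and identify the least majorant as $\phi_{\max}=F_p(|\,x_2|,|\,x_1|)$ (your order of arguments agrees with Theorem~\ref{fla}; the Corollary as printed has them transposed). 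The folding-plus-tangency argument on $S$ is also a genuinely nice way to get the last assertion, and for $p>2$ it is correct: the fold at $(0,1)$ gives $pA+B\le0$, majorization at $x_p$ gives $pA+B\ge0$, and tangency at $x_p$ then fixes $A$, so $u_p$ is even the unique member of the constrained class.

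The gaps are in the step you treat as routine, namely that $u_p$ actually belongs to the class; without it neither the attainment in the third assertion nor the minimality in the fourth is established. First, zigzag concavity of $u_p$ is not verified across the coordinate axes, where $u_p$ has kinks coming from the absolute values. Affineness along a slope $(1,-1)$ line holds only on its piece inside the open first quadrant, and, more seriously, every slope $(1,\pm1)$ line crosses the $x_1$-axis at a point of $\R^2\setminus C$, where $u_p\ne\phi_{\max}$ (there $\phi_{\max}=h_{\max}$), so concavity cannot be borrowed from Theorem~\ref{fla}, and an interior second-derivative computation says nothing about the kink: one must compare the one-sided directional derivatives at $x_2=0$, which reduces to $(p-1)(p^*-1)\ge1$ --- this is precisely Burkholder's check and is the heart of the matter, not a formality. (Your remark about ``two pieces joining $C^1$-ly across $\partial C$ thanks to the normalization in~\eqref{Fp}'' conflates $u_p$, a single real-analytic expression off the axes, with $F_p$; for $u_p$ the only smoothness issue is on the axes.) Second, the case $p<2$ is not obtained by symmetry. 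On $S$ the function $h_{\max}|_S$ fails to be concave on the whole far side of the tangency point when $p<2$ (it has an inflection and is convex near $(1,0)$), so ``concave, hence below its tangent'' breaks down there and an additional verification is needed, e.g.\ the endpoint inequality $u_p(1,0)\ge h_{\max}(1,0)$, i.e.\ $p^{2-p}(p-1)^{p-1}\le1$, combined with concavity of the difference on the remaining piece. Likewise, for $p<2$ your fold at $(0,1)$ only yields $pA+B\le0$, while majorization at the (now different) zero $x_p=(1/p',1/p)$ yields $p'A+B\ge0$; since $p<p'$ these do not force equality, and one must instead fold at $(1,0)$ across the $x_2=0$ axis. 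All of this is repairable by finite computations, but as written the third and fourth assertions are only proved for $p>2$, and even there only modulo the missing axis concavity check for $u_p$.
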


\bigskip

\noindent{\bf Remark.} Notice an interesting thing which we do not know how to
explain. Given function $\Bel_{\max}$ from Theorem~\ref{t1} we can easily
diminish the number of variables and construct $\phi_{\max}$. But amazingly we
can also find $\Bel_{\max}$ if only $\phi_{\max}$ is given. In fact, Theorem
\ref{fla} gives the formula for $\phi_{\max}$ via $F_p$. Then $F_p$ allows us
to find $\Bel_{\max}$. If we now combine Theorem~\ref{t1} and
Theorems~\ref{fla}--\ref{flamin} to conclude that
\begin{cor}
\label{Bfromphi} Given a point $x\in \Omega$, if we know $\phi_{\max}$ we can
find $\Bel_{\max}(x)$ by solving equation:
$$
\phi_{\max}(x_1, x_2) = \phi_{\max}(x_3^{\frac1p},\Bel_{\max}^{\frac1p})\,.
$$
Symmetric formula allows to find $\Bel_{\min}$ if $\phi_{\min}$ is known:
$$
\phi_{\min}(x_2, x_1) = \phi_{\min}(\Bel_{\min}^{\frac1p},x_3^{\frac1p})\,.
$$
\end{cor}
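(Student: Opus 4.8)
The plan is to push the two displayed equations through the formulas of Theorems~\ref{fla} and~\ref{flamin}, which re-express $\phi_{\max}$ and $\phi_{\min}$ via $F_p$, and then to recognize the implicit relations of Theorem~\ref{t1}. Recall that $\phi_{\max}(a,b)=F_p(|b|,|a|)$ and $\phi_{\min}(a,b)=-(p^*-1)^{-p}F_p(|a|,|b|)$ for all $(a,b)\in\R^2$ (Theorems~\ref{fla} and~\ref{flamin}); in particular both functions are even in each variable, in keeping with $\Bel_{\max}$ and $\Bel_{\min}$ being even in $x_1$ and in $x_2$. Since $x_3^{\frac1p}\ge0$ and $\Bel^{\frac1p}\ge0$, we have $\phi_{\max}(x_3^{\frac1p},\Bel^{\frac1p})=F_p(\Bel^{\frac1p},x_3^{\frac1p})$, so the first displayed equation of the Corollary is the same as $F_p(|x_2|,|x_1|)=F_p(\Bel^{\frac1p},x_3^{\frac1p})$, which is exactly the relation Theorem~\ref{t1} asserts determines $\Bel=\Bel_{\max}(x)$. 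The second displayed equation is handled in the same manner: the common positive factor $(p^*-1)^{-p}$ cancels, and after matching the coordinate slots of $\phi_{\min}$ with those of $F_p$ (using evenness in each variable) it reduces to the relation $F_p(|x_1|,|x_2|)=F_p(x_3^{\frac1p},\Bel^{\frac1p})$ that Theorem~\ref{t1} associates with $\Bel_{\min}$. Hence a number $\Bel$ solves a displayed equation of the Corollary precisely when it solves the corresponding implicit relation of Theorem~\ref{t1}.

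It remains to see that each of these relations has a unique admissible root, so that ``solving the equation'' is a well-posed recipe. Here I would use the monotonicity of $F_p$ computed in the proof of the verification theorem: by~\eqref{Fpz1} and~\eqref{Fpz2} one has $F_{z_1}>0$ for $z_1>0$ and $F_{z_2}<0$ for $z_2>0$. Composing with the increasing map $t\mapsto t^{\frac1p}$, for $\Bel_{\max}$ the right side $F_p(\Bel^{\frac1p},x_3^{\frac1p})$ is strictly increasing in $\Bel>0$, while for $\Bel_{\min}$ the right side $F_p(x_3^{\frac1p},\Bel^{\frac1p})$ is strictly decreasing; in either case the relation has at most one positive root. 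Existence of a root in the admissible range --- $|x_2|^p\le\Bel\le\bigl(|x_2|+(p^*-1)x_3^{\frac1p}\bigr)^p$ for $\Bel_{\max}$, and $|x_2|^p\le\Bel\le|x_2|^p+(p^*-1)^{-p}x_3$ for $\Bel_{\min}$ --- is exactly the content of the a priori bounds proved for $B_{\max}$ and $B_{\min}$ at the start of the verification-theorem section, once one notes that on $\Omega$ we have $|x_1|\le x_3^{\frac1p}$, so the left side of each relation falls in the range of the right side. By Theorem~\ref{t1} (whose identification of the candidate with the true Bellman function rests on Lemmas~\ref{est} and~\ref{oppos-est}) this unique root equals $\Bel_{\max}(x)$, resp.\ $\Bel_{\min}(x)$, which proves the Corollary.

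The one step I expect to require care is the bookkeeping in the first paragraph: the two natural ways of writing the defining implicit equation differ by which coordinate occupies which argument slot and, for $\Bel_{\min}$, by the sign and the constant $(p^*-1)^{-p}$ linking $\phi_{\min}$ to $F_p$, so one has to line these up correctly --- using evenness in each variable and the precise statements of Theorems~\ref{fla} and~\ref{flamin} --- before the equivalence with Theorem~\ref{t1} becomes visible. Everything else is an immediate consequence of results already in hand.
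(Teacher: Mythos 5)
Your overall route is exactly the intended one --- the paper gives no separate argument for this corollary, it is meant to follow by feeding the formulas of Theorems~\ref{fla} and~\ref{flamin} into the implicit relations of Theorem~\ref{t1} --- and your treatment of the $\phi_{\max}$ equation, together with the added well-posedness discussion via the monotonicity in~\eqref{Fpz1}--\eqref{Fpz2} and the a priori bounds from the verification section, is correct.

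The gap is in the $\phi_{\min}$ half, precisely at the slot-matching you yourself flag as the delicate step. Evenness in each variable only absorbs signs; it does not allow you to interchange the two argument slots of $\phi_{\min}$ or of $F_p$. Applying Theorem~\ref{flamin} literally to the second display as printed, $\phi_{\min}(x_2,x_1)=\phi_{\min}(\Bel_{\min}^{\frac1p},x_3^{\frac1p})$, and cancelling the common factor $-(p^*-1)^{-p}$, you arrive at
$$
F_p(|x_2|,|x_1|)=F_p(\Bel^{\frac1p},x_3^{\frac1p})\,,
$$
which is the relation Theorem~\ref{t1} assigns to $\Bel_{\max}$, not the relation $F_p(|x_1|,|x_2|)=F_p(x_3^{\frac1p},\Bel^{\frac1p})$ that you claim to obtain; by the very uniqueness you establish, its positive root is $\Bel_{\max}(x)$, which for $p\ne2$ is in general different from $\Bel_{\min}(x)$ (only for $p=2$ do the two relations coincide, since $F_2(z_1,z_2)=-F_2(z_2,z_1)$). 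In other words, what your argument actually proves is the statement with the second display read as $\phi_{\min}(x_1,x_2)=\phi_{\min}(x_3^{\frac1p},\Bel_{\min}^{\frac1p})$, the form consistent with Theorems~\ref{t1} and~\ref{flamin}; it does not prove the display as printed, and no appeal to evenness bridges the two. You should either state this correction of the formula explicitly, or, if you wish to keep the printed form, supply a genuinely additional argument relating $\Bel_{\min}(x_1,x_2,\cdot)$ and $\Bel_{\max}$ at permuted arguments --- which your current text does not contain.
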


\markboth{}{\sc \hfill \underline{References}\qquad}

\end{document}